\documentclass[11pt]{article}
\usepackage[english]{babel}

\usepackage[letterpaper,top=2cm,bottom=2cm,left=3cm,right=3cm,marginparwidth=1.75cm]{geometry}

\usepackage{amsmath}
\usepackage{amsfonts}
\usepackage{mathtools}
\usepackage{graphicx}
\usepackage[colorlinks=true, allcolors=blue]{hyperref}
\usepackage[normalem]{ulem}
\usepackage{soul}
\usepackage{xcolor}
\usepackage{amsthm}
\usepackage{blindtext}

\newtheorem{lemma}{Lemma}
\newtheorem{theorem}{Theorem}
\newtheorem{definition}{Definition}

\newtheorem{proposition}{Proposition}

\title{Fundamental group and twisted Alexander polynomial of link complement in 3-torus}
\date{2023\\ March}
\author{Bao Vuong \\ Tomsk State University \\ vuonghuubao@live.com}
\begin{document}

\maketitle

\sloppy

\noindent{\sc Abstract. } We consider a diagrammatic approach to investigate tame knots and links in three dimensional torus $T^3$. We obtain a finite set of generalised Reidemeister moves for equivalent links up to ambient isotopy. We give a presentation for fundamental group of link complement in 3-torus $T^3$ and the first homology group. We also compute Alexander polynomial and twisted Alexander polynomials of this class of links.\medskip

\noindent{\bf Keywords:} knots, links, three-dimensional torus, twisted Alexander polynomial, first homology group, Alexander-Fox matrix, generalised Reidemeister moves.

\vspace{10pt}

\begin{center}
Dedicated to Professor Andrey Vesnin on the occasion of his 60th birthday
\end{center}

\section{Introduction}

The dawn of mathematical theory of knots dates back to eighteenth century. It was first treated mathematically in 1771 by Alexandre-Théophile Vandermonde who explicitly noted the importance of topological features when discussing the properties of knots related to the geometry of position. Then it was gradually developed till the early part of twentieth century. The second part of the 20th century was a real golden age for knot theory. The theory became a vast subject, that is ubiquitous in topology and now extending beyond its traditional root in topology to algebraic and differential geometry, number theory, mathematical physic. Dale Rolfsen wrote in his famous book "Knots and Links" \cite{Rolfsen} that the best thing that happened to knot theory, however, is that many more scientists are interested in it, not just topologist, and contributing in their own ways. 
M. Dehn, J. W. Alexander and others studied knots from the point of view of the knot group and invariants from homology theory.
The Alexander polynomial is a knot invariant discovered in 1923 by J. W. Alexander \cite{Alexander}. In technical language, the Alexander polynomial arises from the homology of the infinitely cyclic cover of a knot complement. Any generator of a principal Alexander ideal is called an Alexander polynomial. The Alexander polynomial remained the only known knot polynomial until the Jones polynomial was discovered in 1984. V. Jones's discovery of the new polynomial inspired a "polynomial fever" rampant \cite{Rolfsen}. This breakthrough shed light to our natural world of knots, this led to the discovery of many new polynomials. The news polynomial are just the tips of icebergs, deep results and sophisticated structures are hidden itself under water such as quantum groups and Floer homology. 

Classical knot theory is the study of knots and links in the 3-dimensional sphere $S^3$ or just 3-dimensional Euclidean space $R^3$, that are the simplest 3-manifolds. For the latest years studies on knots and links have been generalised in other spaces as solid torus (see \cite{Berge}, \cite{Gabai1}, \cite{Gabai2}), in projective space (see \cite{Drobotukhina}, \cite{HuynhLe}), in lens spaces (see, for example, \cite{Cattabriga1}, \cite{Cattabriga2}, \cite{Cattabriga3}), in homology 3-sphere (\cite{Moussard}.

In this work we extend further the study in this direction to knots and links in other manifold, namely the three dimensional torus $T^3$, that we shortly call it 3-torus. Some results about links in the space as product of a surface and a circle appear recently making use of skein module theory.
 R. Detcherry and M. Wolff in \cite{Renaud} provide an explicit spanning family for the skein modules, associated with any closed oriented surface. Combined with earlier work of Gilmer and Masbaum \cite{Gilmer1,Gilmer2}, they obtain the dimension of the skein modules for product of a surface and a circle. 
In \cite{Mroczkowski} M. K. Dabkowski and M. Mroczkowsk introduce diagrams and Reidemeister moves for links in $F \times S^1$, where $F$ is an orientable surface.
Using these diagrams they compute the Kauffman Bracket Skein Modules for $D^2 \times S^1$, $A \times S^1$ and $F_{0,3} \times S^1$, where $D^2$ is a disk,  $A$ is an annulus and $F_{0,3}$ is a disk with two holes.

Our modest purpose is using the classical diagrammatic approach to obtain a scheme for computation the fundamental group of link complement in 3-torus and also the first homology group. By doing so we introduce a set of Reidemeister type moves for diagrams of link in 3-torus, which is similar to that of Mroczkowski and Dabkowski in \cite{Mroczkowski}. We are also interested in the twisted Alexander polynomial, using Fox free differential calculus we point out how to compute it for links in 3-torus. We prove some basic properties of the twisted Alexander polynomial for some simple links and local links.

\section{Diagrams}

A link $L$ with $n$ components in three-dimensional torus $T^3$ is an embedding of a disjoint union of $n$ circle  $S^1$ into three-dimensional torus. If $n=1$ the link is called knot. Two link are considered equivalent if they are ambient isotopic, that is, if there exists a continuous deformation of $T^3$ which takes one link to the other.

The three-dimensional torus, or 3-torus, is defined as any topological space that is homeomorphic to the Cartesian product of three circles $T^3=S^1 \times S^1 \times S^1$. The 3-torus is a three-dimensional compact manifold with no boundary. It can be obtained by "gluing" the three pairs of opposite faces of a cube, where being "glued" can be intuitively understood to mean that when a particle moving in the interior of the cube reaches a point on a face, it goes through it and appears to come forth from the corresponding point on the opposite face, producing periodic boundary conditions  (see Figure \ref{figure1}). Thus, the 3-torus is the quotient of a cube $\mathcal{C}$ by the equivalence relation on the boundary $\partial \mathcal{C}$ of the cube which identifies its opposite faces. We denote by $F: \mathcal{C} \rightarrow T^3 = \mathcal{C}/\sim $ the quotient map. Denote by $A$ the bottom face, $A'$ the top face, $B'$ the right face, $B$ the left face, $C$ the front face and $C'$ the back face of the cube $C$.

 \begin{figure}[htbp]
\begin{center}
\includegraphics[scale=0.3]{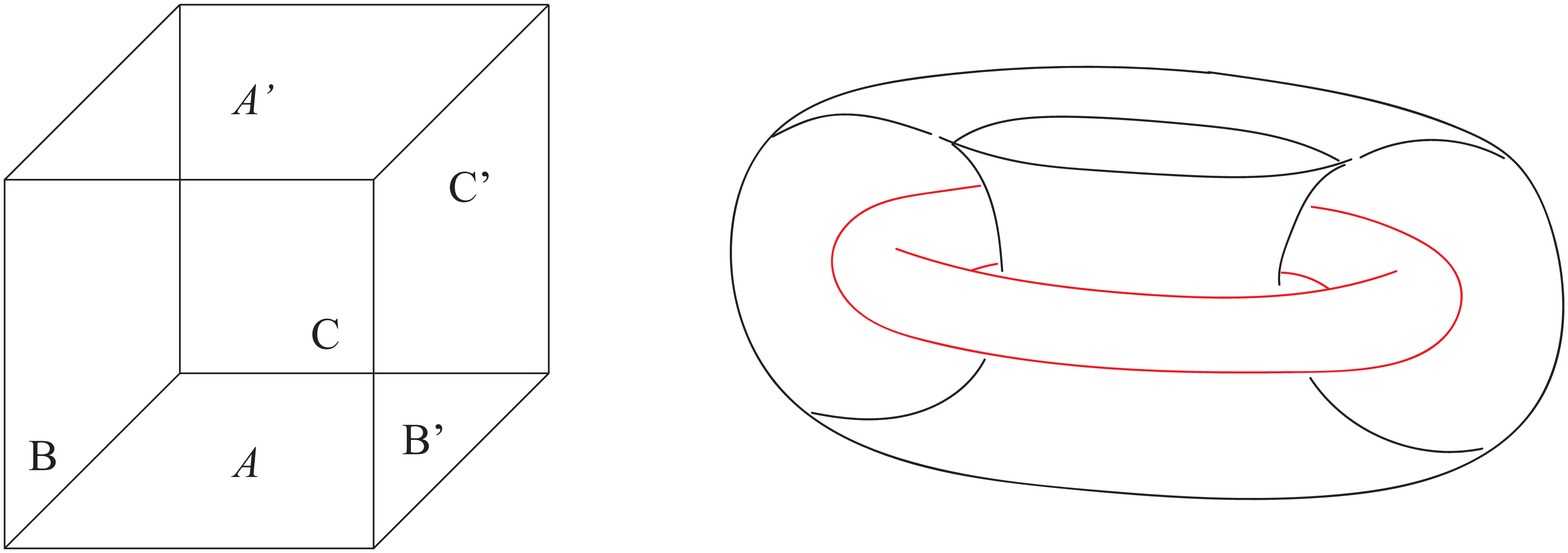}
\end{center}
\caption{3-torus}
\label{figure1}
\end{figure}

We will define diagram for links in 3-torus analogous to that of diagram of links in lens spaces given in \cite{Cattabriga1}. Let $L$ be a link in 3-torus $T^3$ and consider $L'=F^{-1}(L)$. By moving via small isotopy in $T^3$, we can suppose that:

a) $L'$ does not meet the vertices and edges of the cube $\mathcal{C}$;

b) $L' \cap \partial \mathcal{C}$ consists of a finite set of points;

c)$L'$ is not tangent to $\partial \mathcal{C}$;

 \begin{figure}[htbp]
\begin{center}
\includegraphics[scale=0.25]{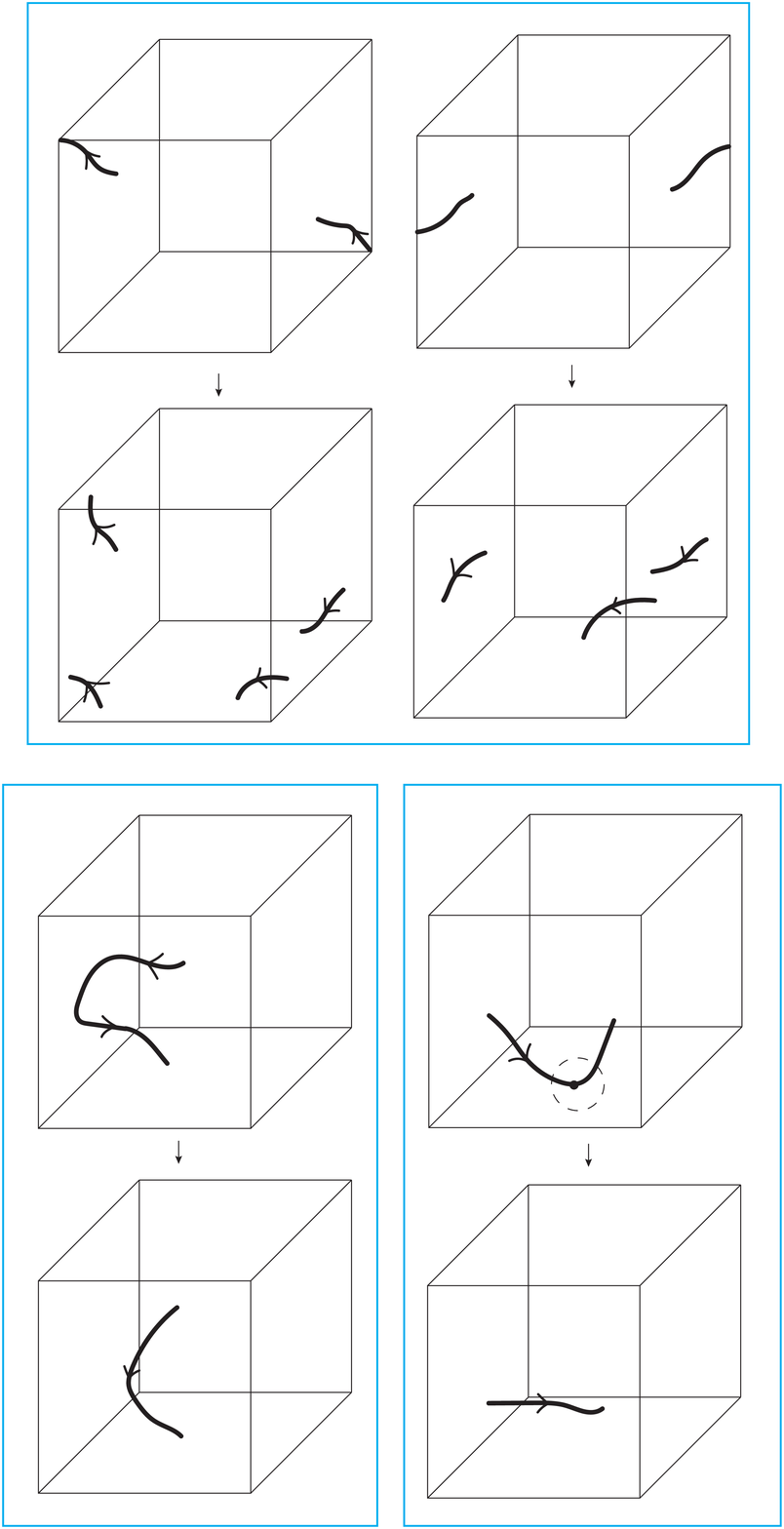}
\end{center}
\caption{Example of small isotopy to reach conditions a), b), c)}
\label{figure2}
\end{figure}

Thus, $L'$ is the disjoint union of closed curves in the interior of the cube $\mathcal{C}$ and arcs properly embedded in $C$, that is only the boundary points belong onto $\partial \mathcal{C}$.

Denote by $A$ the bottom face of the cube. Let $p: \mathcal{C} \rightarrow A$ be the usual orthogonal projection defined by $p(x)=l(x) \cap A$, where $l(x)$ is the line, that is orthogonal to the face $A$ and passing through $x$. Take $L'$ and project it via $p | _{L'}: L' \rightarrow A$. For a point $P \in p(L')$, the preimage $p^{-1}(P)$ may contain more than one point. In this case, we say that $P$ is a multiple point. In particular, if it contains exactly two points, we say that $P$ is a double point. We might assume, by moving L via a small isotopy, that the projection $p|_{L'}: L' \rightarrow A$ of $L$ is regular, that is:

1) the projection of $L'$ contains no cusps;

2) all auto-intersections of $p(L')$ are transversal;

3) the set of multiple point is finite, and all of them are actually double points;

4) no double point is on the edges of bottom face $A$.

We call a double point in the projection of $L'$ in the bottom face $A$ a crossing. As for classical knot diagram we specify   over arcs and under arcs for each crossing relative to the space inside the cube $\mathcal{C}$, that is over and under crossing are defined in the context of bottom and top cube's faces. By doing such projection we forget certain information about the knot in 3-torus. Namely knots in 3-torus can wrap up through faces of the cube, the projection does not carry the information when knots wrap up through the bottom face $A$ and it identification top face. Further we call bottom face the floor and top face the ceiling. For that reason we add vertices with poles (positive and negative) to the projected diagram of knot whenever an arc touches the floor or ceiling, that touched point will be projected to a vertex in knot diagram. We specify the neighbourhood of the vertex is the positive pole if it touches the ceiling and negative pole if it touches the floor (see Figure \ref{diagram}). Note that a vertex always has two different poles as we have supposed in the condition c) above that $L'$ is not tangent to faces $\partial \mathcal{C}$. Also an isolated vertex does not appear since every multiple point in a projection are actually double points. We add a condition of regularity for a projection

5) no vertex at multiple point

Now let $Q$ be a double point, consider $p^{-1}_{|L'}(Q) = {P1,P2}$ and suppose that $P_2$ is closer to bottom face $A$ than $P_1$. Let $U$ be a connected open neighbourhood of $P_2$ in $L'$ such that $p(U)$ contains no other double point and does not meet edges of $A$. We call $U$ underpass relative to $Q$. Every connected component of the complement in $L'$ of all the underpasses (as well as its projection in face $A$) is called overpass.

A diagram of a link $L$ in $T^3$ is a regular projection of $L' = F^{-1}(L)$ on the bottom face $A$, with specified overpasses and underpasses and the projections of the underpasses are not depicted in the diagram (see Figure \ref{diagram}). Thus we have a diagram of knot in a square, the opposite edges of which are identified. 

 \begin{figure}[htbp]
\begin{center}
\includegraphics[scale=0.3]{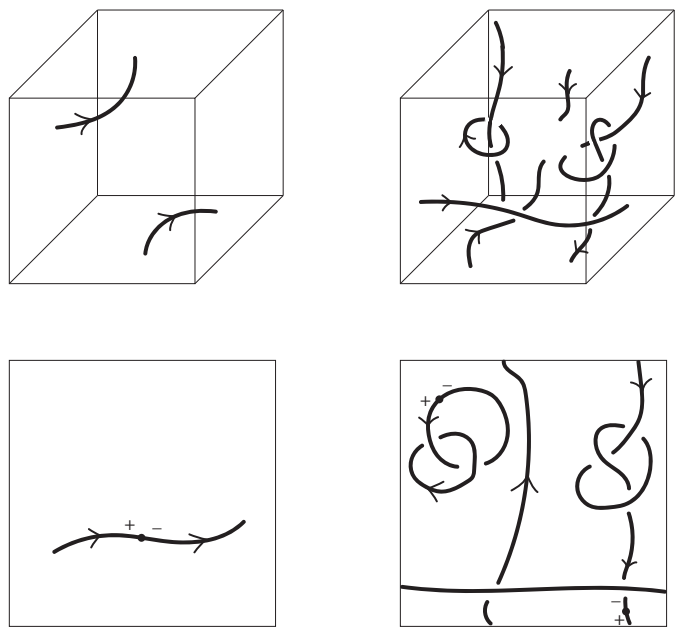}
\end{center}
\caption{Diagram of knot}
\label{diagram}
\end{figure}

\section{Reidemeister moves and vertex moves}

In this section we will have a finite set of moves connecting two different diagrams of the same link. The generalised Reidemeister moves on a diagram of a link $L \subset T^3$, are the moves $R_1, R_2, R_3, R_4, R_5$ (see Figure \ref{rmoves}). The vertex moves are the moves $V_1, V_2, V_3$ (see Figure \ref{vmoves}). Remark that $V_4$ is a forbidden move.

 \begin{figure}[htbp]
\begin{center}
\includegraphics[scale=0.6]{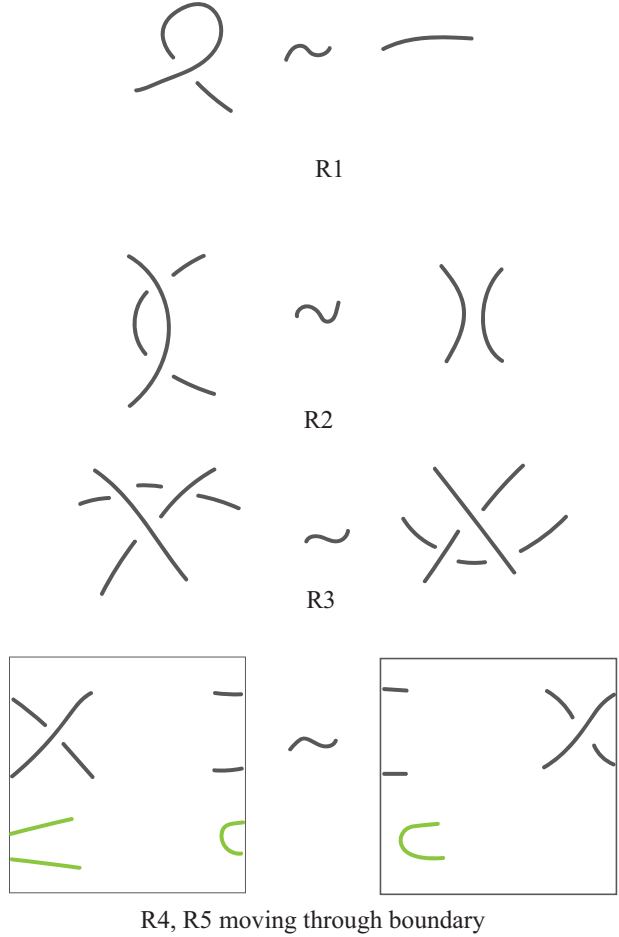}
\end{center}
\caption{Generalised Reidemeister moves}
\label{rmoves}
\end{figure}

 \begin{figure}[htbp]
\begin{center}
\includegraphics[scale=0.35]{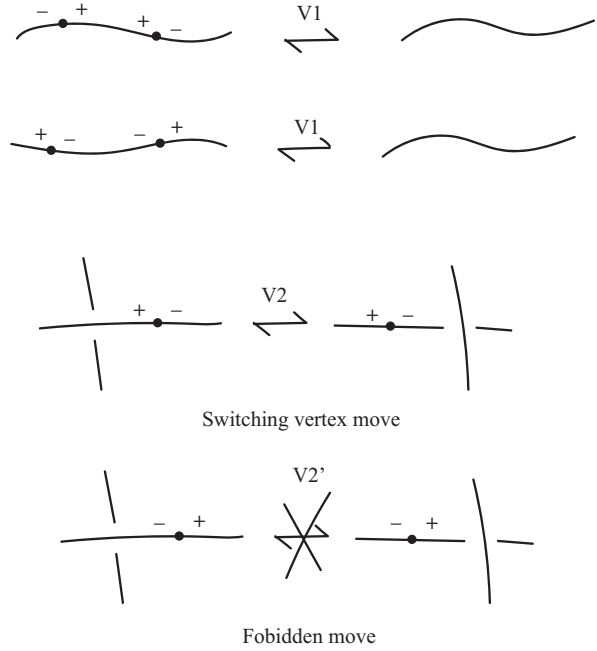}
\end{center}
\caption{Vertex moves}
\label{vmoves}
\end{figure}

\begin{theorem}
Two links $L_0$ and $L_1$ in 3-torus $T^3$ are equivalent if and only if their diagrams can be joined by a finite sequence of generalised Reidemeister moves $R_1, R_2, R_3, R_4, R_5$, vertex moves $V_1, V_2, V_3$ and diagram isotopies.
\end{theorem}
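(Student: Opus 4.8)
The plan is to prove the two implications separately, with the forward (``if'') direction being routine and the converse carrying the real content. For the ``if'' direction I would check that each generalised Reidemeister move $R_1,\dots,R_5$, each vertex move $V_1,V_2,V_3$, and each diagram isotopy is induced by an ambient isotopy of $T^3$. Every such move is supported in a small ball of $T^3$ meeting the link in one or two arcs, so it suffices to exhibit the local isotopy: for $R_1,R_2,R_3$ these are the classical ones carried out in the interior of the cube $\mathcal{C}$; for $R_4,R_5$ they are supported in a ball straddling one of the side faces $B,B',C,C'$ under the identification $F$; and for $V_1,V_2,V_3$ they are supported near the floor $A$ and the ceiling $A'$. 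Composing these local isotopies shows that diagrams related by the listed moves represent ambient isotopic links.

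For the converse I would work in the PL (tame) category and invoke the standard fact that two PL links in a manifold are ambient isotopic if and only if they are related by a finite sequence of elementary triangle moves ($\Delta$-moves), each replacing a segment $PQ$ of the polygonal link by the remaining two sides of a triangle $PQR$ whose interior is disjoint from the rest of the link. Lifting via $F^{-1}$, I would transport such a sequence to the cube $\mathcal{C}$, arranging by a preliminary small isotopy that all intermediate links satisfy conditions a)--c) and that the projection $p$ to the floor $A$ stays regular away from finitely many moments. The key normalisation step is then to subdivide each triangle finely enough (a barycentric-type subdivision) so that every resulting small triangle lies in a ball meeting at most one face of $\partial\mathcal{C}$ and projecting to a controlled position relative to the rest of the diagram.

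With this in place I would carry out a case analysis on how a single small triangle move changes the diagram. If the triangle lies in the interior of $\mathcal{C}$ and its projection meets no other strand, the move is a planar (diagram) isotopy; if its projection passes across one, two, or three strands, or folds a strand back on itself, the effect is precisely one of $R_1,R_2,R_3$, exactly as in the classical Reidemeister theorem. If the triangle meets a side face, then under the identification the lifted link exits one face and re-enters the opposite one, and I would show the induced change of diagram decomposes into $R_4$ and $R_5$. Finally, if the triangle meets the floor or ceiling, it creates, deletes, or slides a touching point that the diagram records as a vertex with a pole, and these are precisely the vertex moves $V_1,V_2,V_3$.

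The step I expect to be the main obstacle is the exhaustiveness of this case analysis near $\partial\mathcal{C}$, specifically when a triangle straddles a face or interacts with a vertex. One must confirm that the finite list $R_4,R_5,V_1,V_2,V_3$ really captures every way a $\Delta$-move can cross a face under the periodic identification, including the interplay between an ordinary crossing and a pole, and that no further move is forced. In particular, the argument must pin down exactly why the configuration $V_4$ is excluded, namely that it would attempt to pass a strand across a vertex in a manner obstructed by the embedding, so that its absence does not compromise completeness.
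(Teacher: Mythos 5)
Your proposal is correct in outline, but it takes a genuinely different route from the paper. You argue in the PL category: decompose the ambient isotopy into elementary $\Delta$-moves, subdivide until each triangle interacts with at most one face of $\partial\mathcal{C}$ and at most one other strand of the projection, then classify the diagram change produced by a single small triangle. The paper instead analyses the isotopy $H_t$ directly by general position (citing Roseman): it notes that at finitely many times $t$ the link $L_t$ or its projection to the floor violates exactly one of the conditions a)--c), 1)--5), catalogues the resulting forbidden configurations $S_1,\dots,S_5$, $P_1$, $P_2$, and reads each move off as the diagram change across such a moment ($S_1,S_2,S_3$ give $R_1,R_2,R_3$; $S_4$ gives $R_4$; $S_5$ gives $R_5$; $P_1,P_2$ give $V_1,V_2$), after which $V_3$ and the addition/removal of vertices along a Jordan curve are derived as consequences of $V_1,V_2$ rather than obtained from the isotopy analysis. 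What your route buys is elementarity --- no appeal to general position or singularity theory, only PL subdivision --- but you must then supply two things the paper gets from its framework: (i) the $\Delta$-move characterisation of ambient isotopy in $T^3$ itself (the standard statement is for $\mathbb{R}^3$ or $S^3$; here you should decompose a PL ambient isotopy of the compact manifold $T^3$ into finitely many isotopies each supported in a ball, and realise each by $\Delta$-moves), and (ii) the exhaustiveness of the case analysis near $\partial\mathcal{C}$, which you correctly identify as the crux --- this is precisely the point where the paper invokes general position to guarantee that its finite list of codimension-one configurations is complete, whereas you must establish the analogous completeness triangle by triangle. Your reading of $V_4$ is also the right one and fits your framework well: the check is that no admissible triangle (one whose interior misses the rest of the link) can induce $V_4$, so excluding it does not compromise completeness. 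Finally, note that in both approaches $V_3$ need not be obtained independently, since the paper shows it follows from $V_1,V_2$ together with the other moves.
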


\begin{proof}
It is easy to see that each Reidemeister and vertex move connects equivalent links, hence a finite sequence of the moves and diagram isotopies does not change the equivalent class of the link. Reversely, if we have two equivalent links $L_0$ and $L_1$, then there exists an isotopy of the ambient space $H: T^3 \times [0,1] \rightarrow T^3$ such that $H_0(L_0)= L_0$ and $H_1(L_0) = L_1$. For each $t \in [0,1]$ we have a link $L_t=H_t(L_0)$.

The link $L_t$ may violate conditions a) b) c) and its projections can violate the regularity conditions 1) 2) 3) 4).

It is not hard to see that the isotopy $H$ can be chosen in such a way that conditions b) is always satisfied. We can assume that there are a finite number of forbidden configurations using general position theory (see \cite{Rose}). And for each $t \in [0,1]$, only one forbidden configuration may occur. Figures \ref{Fig6}, \ref{Fig61} illustrates the situations when a condition is violated during the isotopy.

-- conditions 1) 2) 3) generate configurations $S_1,S_2, S_3$.

-- condition c) generates configurations $S_4$ and $P_1$

-- condition 4) generates configuration $S_5$

-- condition 5) generates configuration $P_3$

From each type of forbidden configuration a transformation of the diagram appears, i.e. a generalized Reidemeister move or vertex move, as follows

-- from $S_1,S_2, S_3$ we obtain the usual Reidemeister moves $R_1, R_2, R_3$;

-- from $S_4$ we obtain move $R_4$;

-- from $P_1$ we obtain move $V_1$

-- from $S_5$ we obtain move $R_5$;

-- from $P_2$  we obtain move $V_2$;

 \begin{figure}[htbp]
\begin{center}
\includegraphics[scale=0.3]{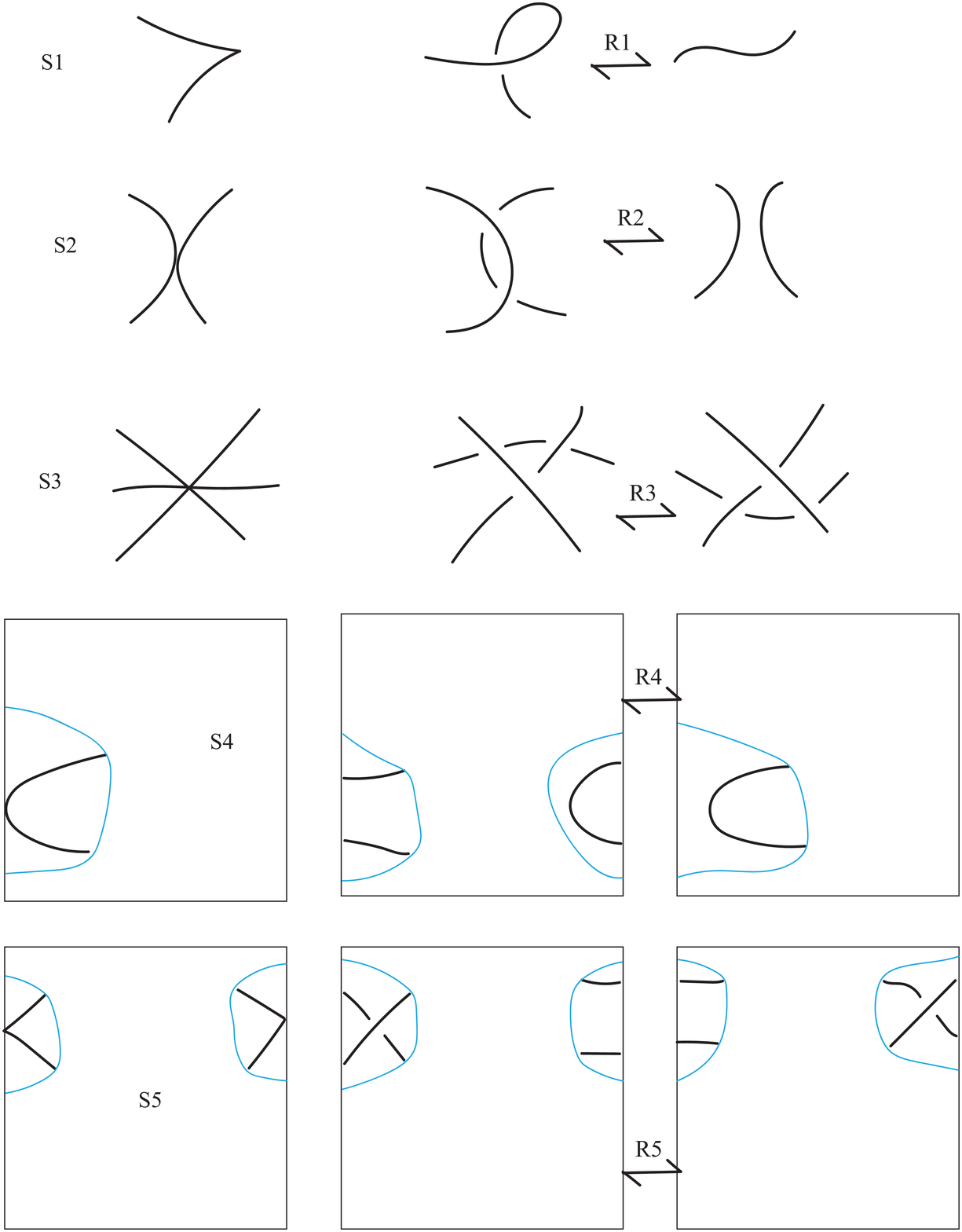}
\end{center}
\caption{Forbidden configurations and diagram moves}
\label{Fig6}
\end{figure}

 \begin{figure}[htbp]
\begin{center}
\includegraphics[scale=0.3]{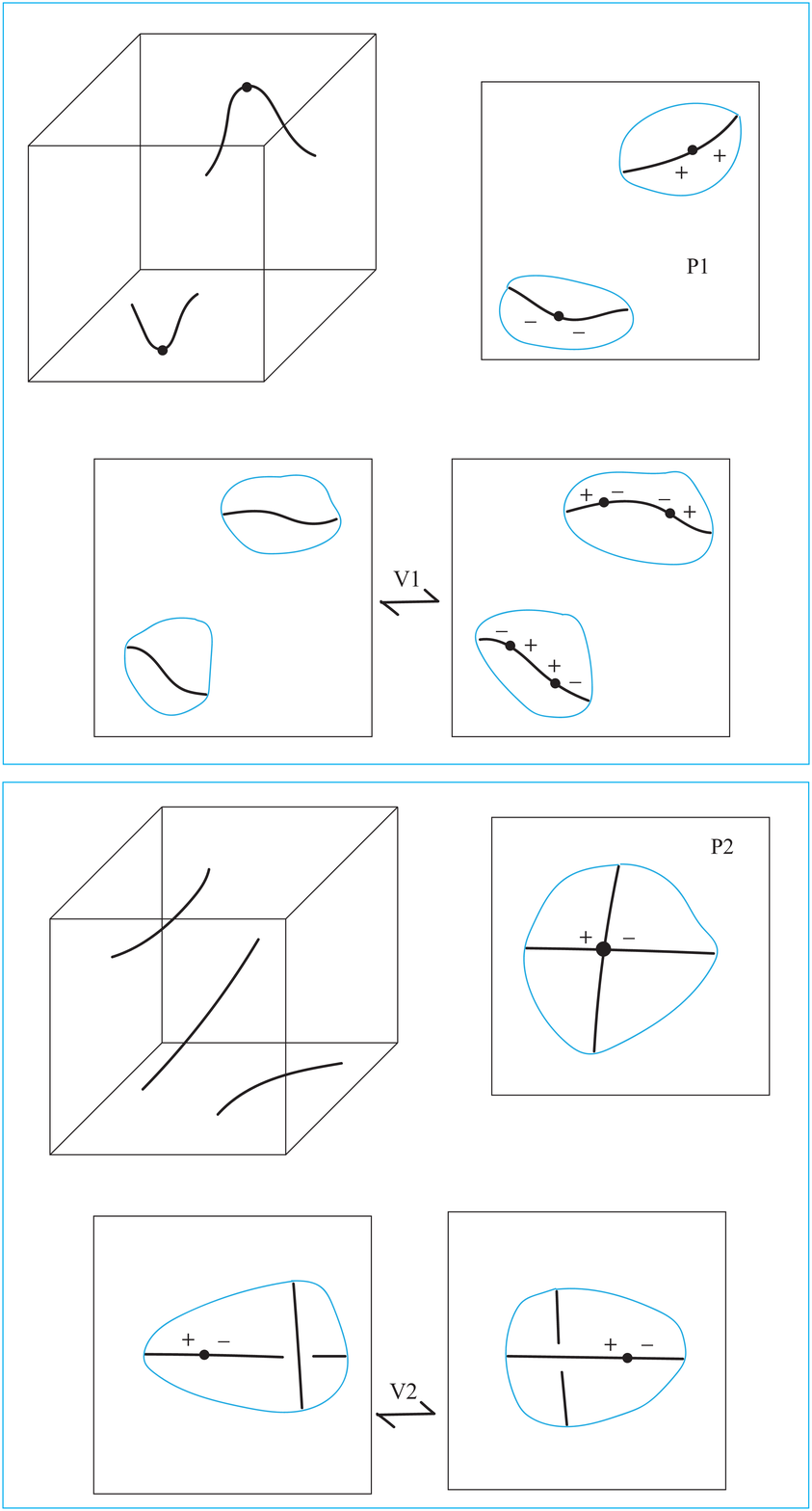}
\end{center}
\caption{Forbidden configurations and diagram moves}
\label{Fig61}
\end{figure}

From the vertex moves $V_1, V_2$ we can obtain two other transformations for a diagram to get a new diagram, that is equivalent to the old one. Namely, the first transformation is the move $V_3$ (see Figure \ref{add}), that describes how we can move a vertex through a double point. The Figure \ref{add} illustrates the case when the moving vertex is in the under arc, the same can be applied when the moving vertex lays in over arc. The second is we can add vertices at intersection of diagram with a Jordan curve, so that the Jordan curve does not meet any double point and the additional vertices have same poles relative to inside and outside of the Jordan curve. Naturally, we can do the reverse process as removing vertices under the same condition.

 \begin{figure}[htbp]
\begin{center}
\includegraphics[scale=0.3]{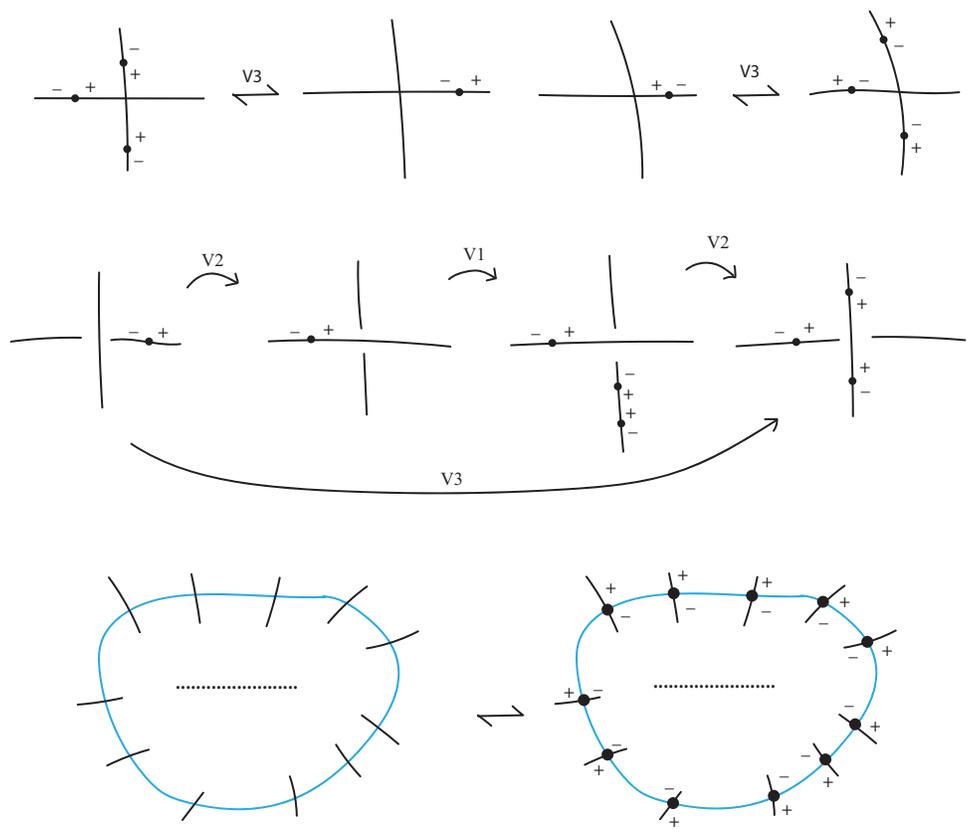}
\end{center}
\caption{Additional moves}
\label{add}
\end{figure}

Thus any pair of diagrams of two equivalent links can be joined by a finite sequence of generalised Reidemeister moves $R_1,..., R_5$; vertex moves $V_1,V_2$ and diagram isotopies.

\end{proof}

\section{Fundamental group}

In this section we obtain, directly from the diagram, a finite presentation for the fundamental group of the complement of links in $T^3$.

Let $L$ be a link in $T^3$, and consider a diagram of $L$. Fix an orientation for $L$, which induces an orientation on both $L'$ and $p(L')$. We call a boundary point is either a vertex or an intersection point of $p(L')$ and the square boundary of the face $A$. Perform an $R_1$ move on each overpass of the diagram having both endpoints on the the same boundary point; in this way every overpass has at most one boundary point. Then label the overpasses as follows: $X_1, ... , X_n$ are the ones ending in the left face $B$; $X_1', ... , X_n'$ are the overpasses ending in the right face $B'$; $Y_1,...,Y_m$ are the overpasses ending in the front face $C$; $Y_1',...,Y_m'$ are the overpasses ending in the back face $C'$; $Z_1,...,Z_l$ are the overpasses ending in the top face $A'$ and $Z_1',...,Z_l'$ are the overpasses ending in the bottom face $A$. The remaining overpasses are labelled by $A_1,...,A_r$. For each $i=1,...,n$, let $\epsilon_i = 1$ if, according to the link orientation, the overpass $X_i$ starts from a point in the left side of the square; otherwise, if $X_i$ ends in the point, let $\epsilon_i=-1$. Also for each $j=1,...,m$, let $\nu_j = 1$ if, according to the link orientation, the overpass $Y_j$ starts from a point in the bottom side of the square; if $Y_j$ ends in the point, let $\nu_j=-1$. And for each $k=1,...,l$, according to the link orientation, let $\tau_k =1$ if the overpass $Z_k$ starts from positive pole of a vertex and $\tau_k =-1$ if the overpass starts from negative pole.

Note that our model is a quotient space modelled the 3-torus, the arrangement of vertices with poles will affect the way how we assign generators of the loops space (the fundamental group of complement), that doesn't occur in the case of Wirtinger presentation for the case of classical knot diagrams. So for eliminating the ambiguity that might occur, we always can transform any diagram of knots in 3-torus via Reidemeister type moves and vertex move, so that every vertices of the diagram locate in the boundary of unique region of the knot diagram. Also positive pole of a vertex always is in the left compare with negative pole of the vertex. Further for computing fundamental group of knot complement in 3-torus, we arrange every vertices with poles to the bottom left region of diagram in the part A of the square; and the part B contains no vertex (see Figure \ref{arr}). Without loss of generality we order the vertices from right to left by increasing index from 1 to $l$. Further we call it vertex arrangement.

 \begin{figure}[htbp]
\begin{center}
\includegraphics[scale=0.35]{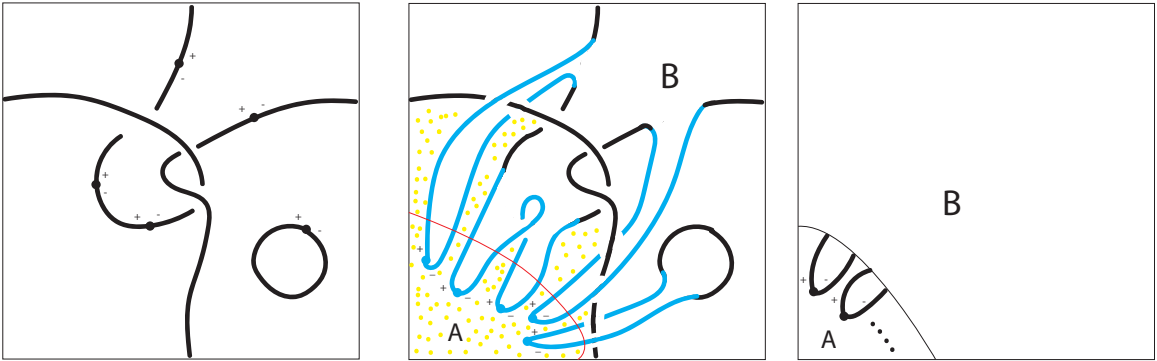}
\end{center}
\caption{Arrangement of vetices}
\label{arr}
\end{figure}

 \begin{figure}[htbp]
\begin{center}
\includegraphics[scale=0.45]{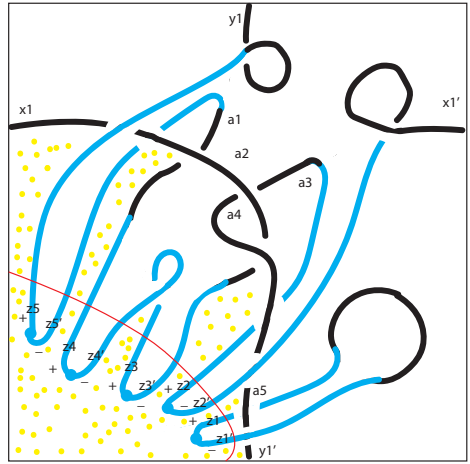}
\end{center}
\caption{Example of overpasses labelling for a link in $T^3$}
\label{Fig8}
\end{figure}

 \begin{figure}[htbp]
\begin{center}
\includegraphics[scale=0.7]{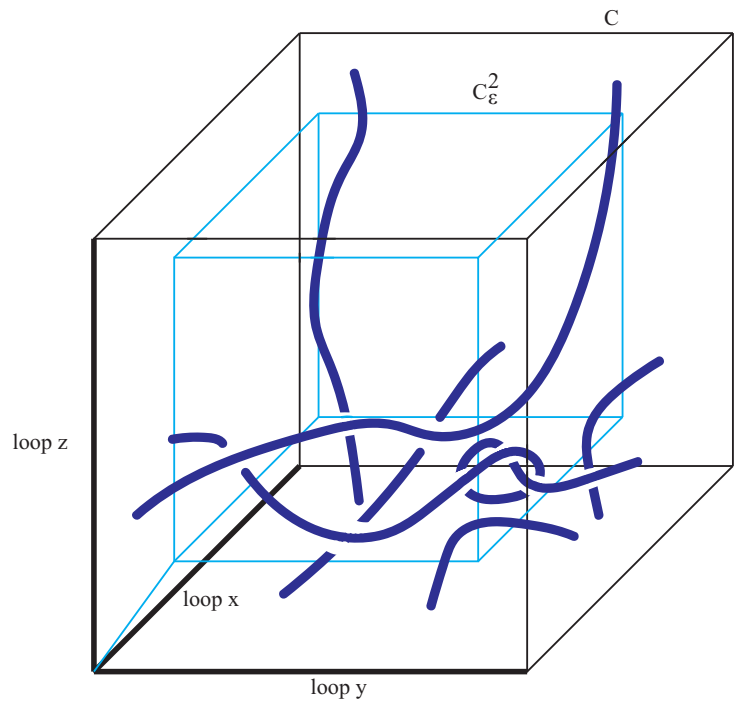}
\end{center}
\caption{A link in 3-torus}
\label{Fig9}
\end{figure}

Associate to each overpass $X_i, Y_j, Z_k, A_q$ a generator $x_i, y_j, z_k, a_q$ respectively, which is a loop around the overpass as in the classical Wirtinger theorem, oriented following the left hand rule.
Moreover let $x,y,z$ be the generators of the fundamental group of the three torus $T^3$ depicted in Figure \ref{Fig9}. Denote by $\gamma_k$ the loop correspond to element $z_{k-1}^{\tau_{k-1}}...z_1^{\tau_1}$ and $\gamma_1=1$. We have the following relations

$\textbf{W}: w_1, . . . , w_s$ are the classical Wirtinger relations for each crossing, that is of the type $a_i a_j a_i^{-1}a_k^{-1} = 1$ or $a_i a_j^{-1} a_i^{-1}a_k^{-1} = 1$, see to Figure \ref{Fig10};

 \begin{figure}[htbp]
\begin{center}
\includegraphics[scale=0.8]{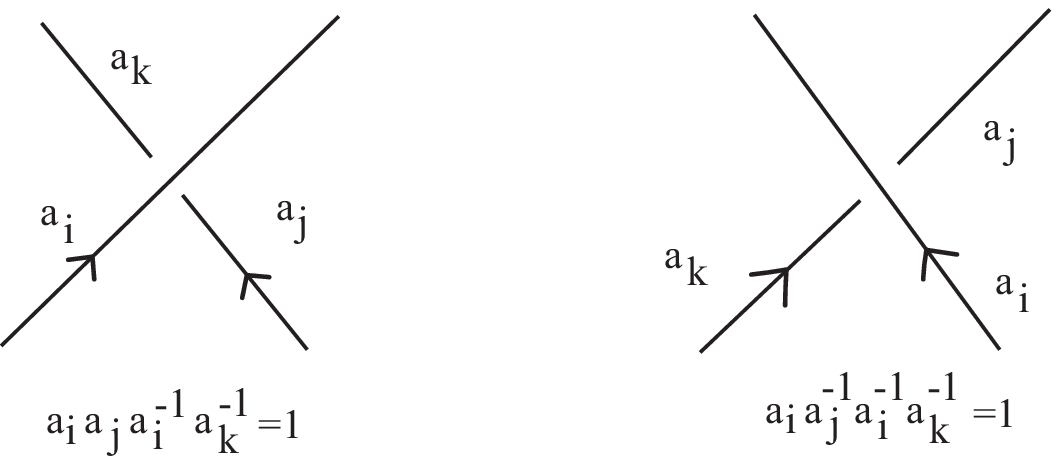}
\end{center}
\caption{Wirtinger relations}
\label{Fig10}
\end{figure}

\textbf{Q:} Relations between loops corresponding to overpasses with identified endpoints on the boundary $x_i'=y\gamma^{-1}_{k+1}x_i\gamma_{k+1}y^{-1}$, $y_j'=x^{-1}\gamma^{-1}_{k+1}y_j\gamma_{k+1}x$, $\gamma^{-1}_{k}z_k'\gamma_{k}=z^{-1}\gamma^{-1}_{k} z_k \gamma_{k} z$

\textbf{T}: Torus relations $x_1^{\epsilon_1}...x_n^{\epsilon_n}=zxz^{-1}x^{-1}$, $y_1^{\nu_1}...y_m^{\nu_m}=yzy^{-1}z^{-1}$, $z_1^{\tau_1}...z_l^{\tau_l}=yxy^{-1}x^{-1}$, when $n, m$ or $l$ is zero then the corresponding product define to be 1.

\begin{theorem}
Let base point be the vertex of the cube, then the group $\pi_1 (T^3 \backslash L)$ of the link $L$ in 3-torus $T^3$ has generators $ x,x_1,..., x_n, x_1',...,x_n', y, y_1,...,y_m,y_1',...,y_m'$, $z, z_1,...,z_l,z_1',...,z_l',a_1,...,a_r $ and relations $W,T$ and $Q$.
\end{theorem}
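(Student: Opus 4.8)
The plan is to compute $\pi_1(T^3\setminus L)$ in two stages and then match the result with the stated presentation. First I would compute the fundamental group of the complement of the tangle $L'=F^{-1}(L)$ inside the cube $\mathcal{C}$ by the classical Wirtinger method applied to the projection onto the floor $A$; then I would recover $T^3\setminus L$ from this by performing the three pairs of opposite-face identifications and apply van Kampen's theorem (equivalently, build a $2$-dimensional spine of the complement and read $\pi_1$ off its $1$- and $2$-cells). Throughout I keep the normalizations already fixed in the text: the basepoint is the image $v_0$ of the cube's corners, which is the unique $0$-cell of $T^3$; each overpass meets $\partial\mathcal{C}$ in at most one point after the $R_1$ moves; and the vertices are put into the ordered bottom-left vertex arrangement, whose purpose is precisely to standardize the connecting paths used below.

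For the cube stage, note that $\mathcal{C}$ is a ball and $L'$ is a properly embedded tangle. Crucially, a strand leaving the left face $B$ and the strand entering the opposite face $B'$ are, inside $\mathcal{C}$, two distinct arcs (they are joined only after the identification), and similarly for the $C,C'$ and $A,A'$ pairs. Running the usual Wirtinger argument therefore produces a generator for the meridian of every overpass, namely all of $x_i,x_i',y_j,y_j',z_k,z_k',a_q$, subject to exactly one relation per crossing; these are the relations $\mathbf{W}$ of Figure \ref{Fig10}. At this stage there are no torus generators, and no relation yet links a primed meridian to its unprimed partner.

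For the identification stage, I would glue the three opposite face-pairs. Each gluing introduces one of the loops $x,y,z$ generating $\pi_1(T^3)$, realized as the edge of $T^3$ running through the corresponding face-pair, and each identifies the meridian of an overpass meeting a face with the meridian of its partner on the opposite face. Comparing the two meridians along the chosen basepaths (which run around the torus and past the vertices of the arrangement) yields the conjugation relations $\mathbf{Q}$, the conjugators $\gamma_{k+1}=z_k^{\tau_k}\cdots z_1^{\tau_1}$ and the outer factors $x,y,z$ being exactly the words read off those paths. Finally, because the edges of the cube faces are identified in pairs, each face-pair becomes one of the three $2$-cells of $T^3$ whose attaching word is a commutator of two of $x,y,z$; punctured by the strands of $L'$ transverse to it, such a $2$-cell contributes a relation equating the signed product of those meridians to that commutator, the signs being $\epsilon_i,\nu_j,\tau_k$ according to the orientation of each strand through the face. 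These three relations are $\mathbf{T}$. Assembling the generators and relations of both stages gives precisely $\mathbf{W},\mathbf{Q},\mathbf{T}$, and the primed meridians may be eliminated through $\mathbf{Q}$ if one prefers a smaller generating set.

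The main obstacle is not the topology but the bookkeeping needed to pin down the exact words rather than mere conjugacy statements. One must verify that the ordered bottom-left arrangement forces the path beneath a given overpass to spell out exactly $\gamma_k=z_{k-1}^{\tau_{k-1}}\cdots z_1^{\tau_1}$, that the factors $x,y,z$ in $\mathbf{Q}$ sit on the prescribed sides, that each of the three face $2$-cells yields the correct commutator with the correct exponent vector in $\mathbf{T}$, and that the interior $3$-cell together with the edge identifications imposes no further relations (the $3$-cell being irrelevant for $\pi_1$). Finally one must check that the presentation does not depend on the diagram: by the equivalence theorem proved above it suffices to see that each generalized Reidemeister move and vertex move alters the generators and relations only by Tietze transformations, which is a finite case check.
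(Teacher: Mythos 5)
Your proposal is correct in substance and arrives at the stated presentation, but it is organized genuinely differently from the paper's argument, and one step needs the right tool to be rigorous. The paper never computes the group of the full cube complement: it applies Seifert--van Kampen \emph{once}, to the union $T^3\setminus L=F(\tilde I_\epsilon\setminus L)\cup F(E_\epsilon\setminus L)$, where the inner piece (a slightly shrunken cube containing all underpasses) carries the classical Wirtinger presentation giving $W$, and the outer collar deformation retracts onto the punctured $2$-skeleton $E\setminus L$, $E=\partial\mathcal{C}/\sim$, whose CW structure (one $0$-cell, the three $1$-cells $x,y,z$, three $2$-cells) yields the torus relations $T$ in terms of auxiliary puncture loops $d_{x_i},d_{y_j},d_{z_k}$; matching the puncture loops of the intersection (a punctured sphere) into the two pieces gives the identifications $x_i=d_{x_i}$, etc., together with the conjugation relations $Q$, after which the $d$'s are eliminated. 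You instead compute the tangle group of the whole cube first and then glue the three face pairs. That route can work, but note that the classical van Kampen theorem applies to a union of open sets, not to a self-gluing: identifying two disjoint faces of a connected space is an HNN extension of its fundamental group, so your second stage is really three successive HNN extensions (or the groupoid version of van Kampen), or else must be converted into a union decomposition --- and the natural conversion is exactly the paper's collar decomposition. If you take the HNN route, be aware that after the first gluing the remaining faces are no longer punctured disks: the front face becomes a punctured annulus and the floor a punctured torus, so the later gluings impose, besides the meridian identifications $Q$, extra relations coming from the annulus core and the torus generators; these are precisely the source of the three commutator relations $T$, which is consistent with your attribution of $T$ to the punctured $2$-cells. (A useful check: the gluings contribute $n+(m+1)+(l+1)$ relations, one fewer than the $n+m+l+3$ relations of $Q$ and $T$, reflecting a single redundancy, just as one Wirtinger relation is redundant classically.) Finally, the diagram-independence check via Tietze transformations that you propose at the end is not needed for this theorem, which asserts a presentation attached to a given diagram; independence is automatic since all diagrams of $L$ present the same group $\pi_1(T^3\setminus L)$.
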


\begin{proof}
Suppose that $L' = F^{-1}(L)$ is such that $p_{|L'} : L' \rightarrow A$ is a regular projection with the vertex arrangement as above. Consider a cube surface $C^2_{\epsilon}$, so that the maximal distance between the surface of the sphere and the cube boundary $\partial C$ is $\epsilon$. The topological sphere $C^2_{\epsilon}$ divides the cube $\mathcal{C}$ into two pieces: call $I_{\epsilon}$ the internal one and $E_{\epsilon}$ the external one. Choose $\epsilon$ small enough such that all the underpasses belong into int$(I_{\epsilon})$. Let $V, V_{\epsilon}$ be the front bottom left vertices of $\mathcal{C}$ and $C^2_{\epsilon}$ correspondingly (see Fig. \ref{Fig9}). We consider $\tilde{C}^2_{\epsilon}={C}^2_{\epsilon} \cup \overline{VV_{\epsilon}}$ and $\tilde{I}_{\epsilon}=I_{\epsilon}  \cup \overline{VV_{\epsilon}}$.

Now we specify loops, associated with overpasses by the following. Each loop goes along the loop $y$ from the base point and then above every arcs of the knot and winds around the corresponding overpass once and back to the base point. So that the loops have projection to the square, drawn under every vertices in the part $A$ of the square (see Figure \ref{loops}). 

 \begin{figure}[htbp]
\begin{center}
\includegraphics[scale=0.4]{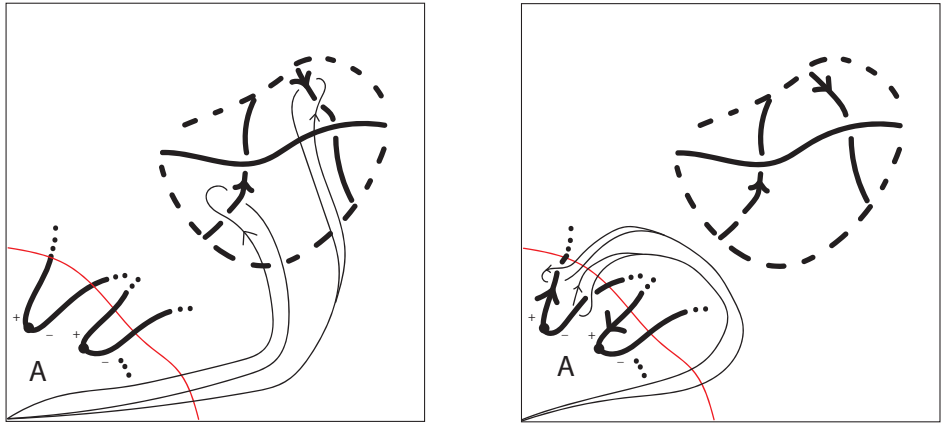}
\end{center}
\caption{Specifying loops}
\label{loops}
\end{figure}

We use Seifert-van Kampen theorem to compute the fundamental group of link complement $\pi_1(T^3 \backslash L,*)$ in three torus $T^3$ with decomposition $(T^3 \backslash L) = (F(\tilde{I}_{\epsilon} \backslash L) \cup (F(E_{\epsilon}\backslash L) $.

The fundamental group of $F(\tilde{I}_{\epsilon} \backslash L)$ can be obtained as in the classical Wirtinger Theorem:

$$\pi_1(F(\tilde{I}_{\epsilon} \backslash L,*)=\langle a_1,...,a_r | w_1, ..., w_s \rangle $$.

For $F(E_{\epsilon}\backslash L)$, we proceed in the following way: first of all observe that we can retract $F(E_{\epsilon}\backslash L)$ to $E \backslash L$, where $E$ is $\partial \mathcal{C} / \sim$. Now the 2 -complex $E$ is a CW-complex consists of: one 0-cell $V$  since all vertices of the cube are identified; three 1-cells correspond to three sets of parallel edges; and three 2-cells correspond to three pair of parallel faces. In order to obtain $\pi_1(E \backslash L)$, we need to add the loops $d_{x_1}, ..., d_{x_n}, d_{y_1}, ..., d_{y_m},d_{z_1}, ..., d_{z_l}$ around the points of $L$ (see Fig. \ref{Fig11}). The relations given by the 2-cell are $d_{x_1}...d_{x_n}=xzx^{-1}z^{-1}$, $d_{y_1}...d_{y_m}=yzy^{-1}z^{-1}$, $d_{z_1}...d_{z_k}=xyx^{-1}y^{-1}$. Hence the fundamental group of $E \backslash L$ is:

\begin{equation*}\label{eq1}
  \begin{gathered}
   \pi_1(E \backslash L,*)=\langle d_{x_1}, ..., d_{x_n}, d_{y_1}, ..., d_{y_m},d_{z_1}, ..., d_{z_l} | d_{x_1}...d_{x_n}=xzx^{-1}z^{-1},\\
    d_{y_1}...d_{y_m}=yzy^{-1}z^{-1}, d_{z_1}...d_{z_l}=xyx^{-1}y^{-1} \rangle
  \end{gathered}
\end{equation*}

 \begin{figure}[htbp]
\begin{center}
\includegraphics[scale=0.6]{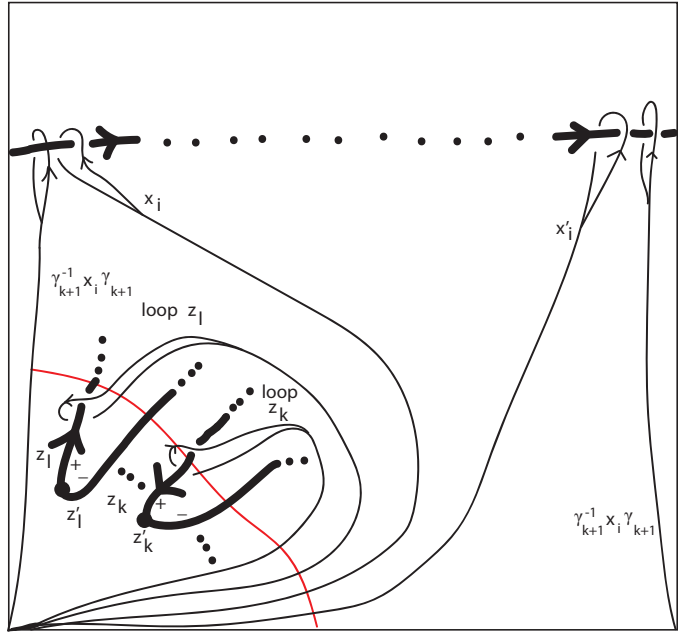}
\end{center}
\caption{Identifying loops $x'_i$}
\label{loopx}
\end{figure}
 
 Finally, the fundamental group of $F(\tilde{C}^2_{\epsilon})\backslash L = (F(\tilde{I}_{\epsilon} \backslash L) \cap (F(E_{\epsilon}\backslash L)$ is generated by $ x_1,..., x_n, x_1',...,x_n', y_1,...,y_m,y_1',...,y_m', z_1,...,z_l,z_1',...,z_l' $. By Seifert-Van Kampen theorem, we need identify each $ x_1,..., x_n, y_1,...,y_m$, $z_1,...,z_l$ with the corresponding generator $d_{x_1}, ..., d_{x_n}, d_{y_1}, ..., d_{y_m}$, $d_{z_1}, ..., d_{z_l}$. Furthermore we need to identify $ x_1',...,x_n',y_1',...,y_m'$, $ z_1',...,z_l' $ with suitable loops in the CW-complex.
 
For the loop $x'_i$, we proceed as follow (see Figure \ref{loopx}), conjugate the loop $x_i$ with $\gamma_{k+1}= z_{k}^{\tau_{k}}...z_1^{\tau_1}$. We get the loop $\gamma^{-1}_{k+1}x_i\gamma_{k+1}$, that winds around the arc $x_i$ once and comes along the loop $x$. Now the loop $x'_i$ can be identified with the loop $y\gamma^{-1}_{k+1}x_i\gamma_{k+1}y^{-1}$. So we have relations $x_i'=y\gamma^{-1}_{k+1}x_i\gamma_{k+1}y^{-1}$ for all $i$.

 \begin{figure}[htbp]
\begin{center}
\includegraphics[scale=0.6]{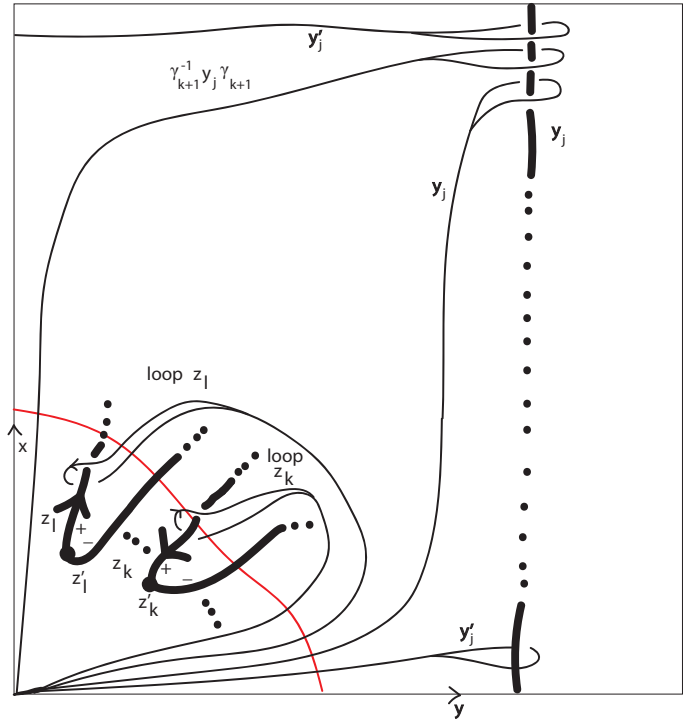}
\end{center}
\caption{Identifying loops $y'_j$}
\label{loopy}
\end{figure}

For the loop $y'_j$, we have analogous situation (see Figure \ref{loopy}), conjugate the loop $y_j$ with $\gamma_{k+1}= z_{k}^{\tau_{k}}...z_1^{\tau_1}$. We get the loop $\gamma^{-1}_{k+1}y_j\gamma_{k+1}$, that winds around the arc $y_j$ once and comes along the loop $x$. Now the loop $y'_j$ can be identified with the loop $x^{-1}\gamma^{-1}_{k+1}y_i\gamma_{k+1}x$. So we have relations $y'_j=x^{-1}\gamma^{-1}_{k+1}y_i\gamma_{k+1}x$ for all $j$. 

 \begin{figure}[htbp]
\begin{center}
\includegraphics[scale=0.6]{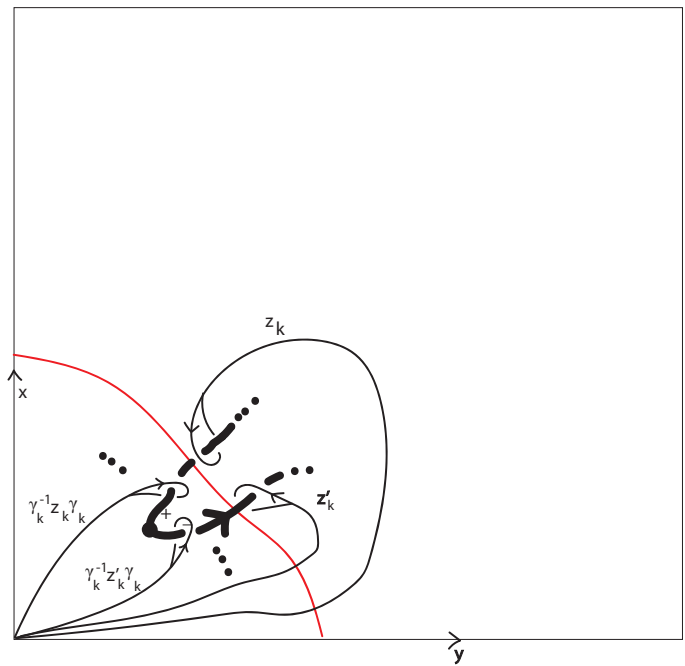}
\end{center}
\caption{Identifying loops $z'_k$}
\label{loopz}
\end{figure}
 
 For the loop $z'_k$ (see Figure \ref{loopz}), conjugate the loop $z_k$ and $z'_k$ with $\gamma_{k}= z_{k-1}^{\tau_{k-1}}...z_1^{\tau_1}$. We get the loop $\gamma^{-1}_{k}z_k\gamma_{k}$ and $\gamma^{-1}_{k}z'_k\gamma_{k}$. Now the loop $\gamma^{-1}_{k}z'_k\gamma_{k}$ can be identified with the loop $z^{-1}\gamma^{-1}_{k}z_k\gamma_{k}z$. So we have relations $z'_k=\gamma_{k}z^{-1}\gamma^{-1}_{k}z_k\gamma_{k}z\gamma^{-1}_{k}$ for all $k$. 
 
 At last we remove $d_{x_1}, ..., d_{x_n}, d_{y_1}, ..., d_{y_m},d_{z_1}, ..., d_{z_l}$ from the group presentation, obtaining a presentation for the fundamental group of link complement in three torus. That has generators $ x,x_1,..., x_n, x_1',...,x_n', y, y_1,...,y_m,y_1',...,y_m', z, z_1,...,z_l,z_1',...,z_l',a_1,...,a_r $ and relations $W,T$ and $Q$.
 
 \end{proof}
 
 \section{The first homology group}
 In this section we show how to determine from the diagram the first homology group of links in 3-torus $T^3$. Consider a diagram of an oriented knot $K \subset T^3$ and let $\epsilon_i, \nu_j, \tau_k$ be as defined in the previous section. Define $\delta = \sum_{i=0}^{n} \epsilon_i$,  $\sigma = \sum_{j=0}^{m} \nu_j$,  $\xi = \sum_{k=0}^{l} -\tau_i$.
 
\begin{lemma}\label{homotype}
If $K \subset T^3$ is an oriented knot and $[K]$ is the homology class of $K$ in $H_1(T^3)$, then $[K]=(\delta, \sigma, \xi)$.
\end{lemma}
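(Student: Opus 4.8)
The plan is to compute the image of the fundamental class of $K$ under the Hurewicz map $\pi_1(T^3\setminus K)\to H_1(T^3\setminus K)\to H_1(T^3)$, reading everything off the presentation established in the previous theorem. Abelianizing $H_1(T^3)\cong\mathbb{Z}^3$ with basis $x,y,z$, I would track where the homology class $[K]$ lands by expressing it as a word in the generators and then passing to the abelianization.

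First I would observe that the homology class $[K]\in H_1(T^3)$ is detected by how $K$ winds around the three handles of the torus, i.e.\ by its intersection numbers with the three 2-cells of the cube (the three pairs of identified faces). Equivalently, $[K]$ is the signed count of how $K$ crosses each pair of faces. By the labelling convention from the fundamental-group section, an overpass ending on the left/right face $B,B'$ contributes $\pm 1$ to the $x$-direction with sign $\epsilon_i$; an overpass ending on the front/back face $C,C'$ contributes $\pm 1$ to the $y$-direction with sign $\nu_j$; and a vertex (touching floor or ceiling $A,A'$) contributes $\pm 1$ to the $z$-direction with sign $\tau_k$. Thus I expect the three coordinates of $[K]$ to be exactly the three sums $\delta=\sum\epsilon_i$, $\sigma=\sum\nu_j$, and $\xi=\sum(-\tau_k)$.

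To make this rigorous from the presentation rather than from geometric intuition, I would abelianize the torus relations $\textbf{T}$. In $H_1(T^3\setminus K)$ written additively, the relation $x_1^{\epsilon_1}\cdots x_n^{\epsilon_n}=zxz^{-1}x^{-1}$ abelianizes so that the right-hand side becomes $0$, forcing $\sum_i \epsilon_i x_i=0$; similarly the other two $\textbf{T}$-relations give $\sum_j \nu_j y_j=0$ and $\sum_k \tau_k z_k=0$. Next, in the abelianization every meridian generator $x_i,y_j$ around an overpass maps to the same class as the corresponding generator $x,y,z$ of $T^3$ (all meridians of one component are conjugate, hence abelianize to a single class, and the $\textbf{Q}$-relations collapse $x_i',y_j',z_k'$ to $x_i,y_j,z_k$ up to conjugation). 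The subtlety with the sign convention on the $z$-direction, where $\xi$ carries the minus sign, comes from the orientation of the pole/vertex relative to the $z$-generator; I would verify it by carefully reading the $\textbf{Q}$-relation $\gamma_k^{-1}z_k'\gamma_k=z^{-1}\gamma_k^{-1}z_k\gamma_k z$, which after abelianization relates $z_k'-z_k$ to the generator $z$ with the appropriate sign.

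The main obstacle I anticipate is not the algebra but \emph{bookkeeping the orientation conventions consistently}: I must confirm that the signs $\epsilon_i,\nu_j,\tau_k$ as defined (via "starts from" versus "ends at" a given side or pole) align with the direction in which the corresponding generator $x,y,z$ is oriented in Figure \ref{Fig9}, so that the signed crossing counts genuinely assemble into the class $[K]$ and in particular produce the $-\tau_k$ rather than $+\tau_k$ in the third coordinate. Once the conventions are pinned down, the computation is a direct abelianization of the three $\textbf{T}$-relations, reading off the coefficients of $x,y,z$ respectively, and concluding $[K]=(\delta,\sigma,\xi)$.
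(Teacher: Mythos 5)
There is a genuine gap, and it lies exactly where you placed the burden of rigor. The class $[K]$ is the class of the knot itself in $H_1(T^3)$, and $K$ is not a loop in its own complement; consequently it cannot be extracted by abelianizing the presentation of $\pi_1(T^3\setminus K)$. Your opening plan --- ``compute the image of the fundamental class of $K$ under $\pi_1(T^3\setminus K)\to H_1(T^3\setminus K)\to H_1(T^3)$'' --- is ill-posed for this reason. Concretely, watch what the abelianized \textbf{T}-relations actually say: the commutators $zxz^{-1}x^{-1}$, $yzy^{-1}z^{-1}$, $yxy^{-1}x^{-1}$ die, and the meridians $x_i,y_j,z_k$ of the single component all become one class $g$ (this much of your argument is fine), so the three relations read $\delta g=0$, $\sigma g=0$, $\xi g=0$. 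These are relations on the \emph{meridian} class in $H_1(T^3\setminus K)$ --- precisely the input to Lemma \ref{homology} --- and they have zero coefficient on $x,y,z$, so there are no ``coefficients of $x,y,z$'' to read off, and no occurrence of $[K]$ anywhere. Your intermediate claim that each meridian generator ``maps to the same class as the corresponding generator $x,y,z$ of $T^3$'' is also false: meridians become homologous to one another, not to the torus generators. In effect you have rederived a step of Lemma \ref{homology}, not proved Lemma \ref{homotype}; the fact that the meridian has order $\gcd(\delta,\sigma,\xi)$ is a duality \emph{consequence} of $[K]=(\delta,\sigma,\xi)$, not a proof of it.

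What is missing is a geometric identification of $[K]$, which is what the paper supplies: for each pair of identified boundary points it adjoins an explicit loop ($\alpha_i$ joining $X_i$ to $X_i'$, $\beta_j$ joining $Y_j$ to $Y_j'$, $\gamma_k$ joining $Z_k$ to $Z_k'$) with known homology class ($\epsilon_i$, $\nu_j$, $-\tau_k$ times the appropriate generator), so that the resulting closed curve $K'=K\cup\bigcup\alpha_i\cup\bigcup\beta_j\cup\bigcup\gamma_k$ is null-homologous; solving $0=[K']=[K]+\sum_i[\alpha_i]+\sum_j[\beta_j]+\sum_k[\gamma_k]$ gives the statement. Your first paragraph --- identifying the coordinates of $[K]$ with signed intersection numbers of $K$ against the three 2-tori coming from the identified face pairs --- is a correct alternative route and could be made rigorous (those 2-tori are Poincar\'e dual to the generators of $H_1(T^3)$, and the signed face-crossings are exactly $\delta,\sigma,\xi$). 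But you explicitly demoted that to ``intuition'' and rested the proof on the presentation-based computation, which cannot succeed; to repair the proposal you must promote the intersection-number (or the paper's closing-up) argument to the actual proof.
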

 
  \begin{figure}[htbp]
\begin{center}
\includegraphics[scale=0.8]{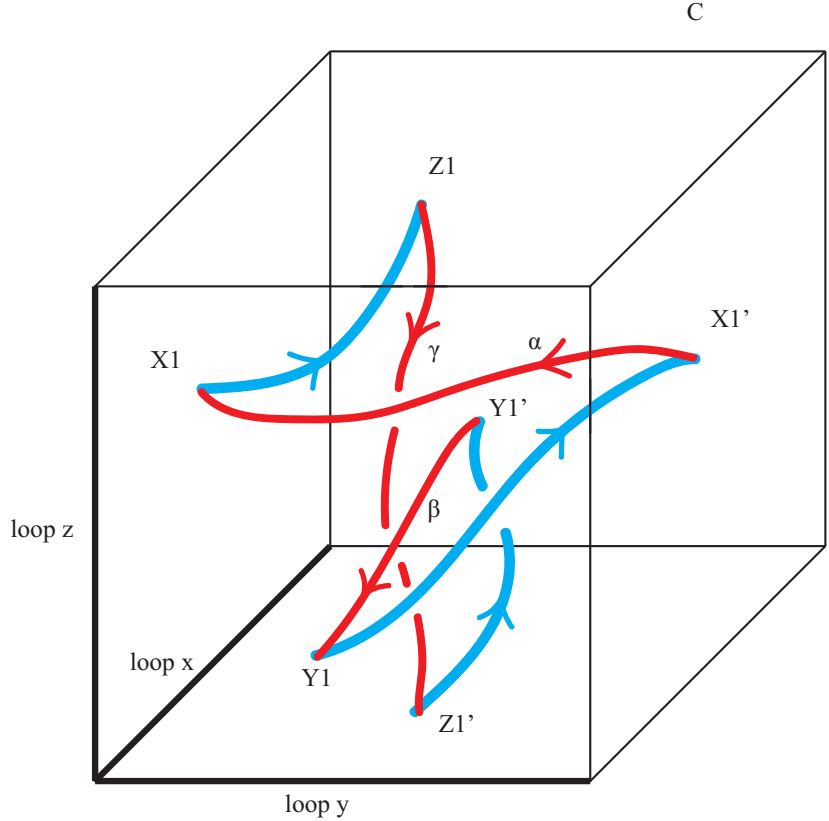}
\end{center}
\caption{Connecting arcs for knot in $T^3$}
\label{Fig12}
\end{figure}
 
 \begin{proof}
 Let $x,y,z$ be generator of $H_1(T^3)=\mathbb{Z} \oplus \mathbb{Z}  \oplus \mathbb{Z}$, as shown in Figure \ref{Fig12}. Let $K \cap (\partial C/\sim) = \{X_1, ... , X_n,X_1', ... , X_n',Y_1,...,Y_m,Y_1',...,Y_m',Z_1,...,Z_l,Z_1',...,Z_l'\}$.
For $i=1,...,n;j=1,...,m;k=1,...,l$ consider the identification class $[X_i]=\{X_i,X_i'\}$; $[Y_j]=\{Y_j,Y_j'\}$; $[Z_k]=\{Z_k,Z_k'\}$. The notations $X_i,X'_i,Y_j,Y'_j,Z_k,Z'_k$ are in the manner as at the section 1 about diagrams. Denote with $\alpha_i$ the loop (red curve) in $T^3$ connecting $X_i$ with $X_i'$ as in Figure \ref{Fig12}, oriented according to the orientation of the knot$K$ as depicted when $\epsilon_i=1$ and in the opposite direction if $\epsilon_i=-1$. Analogously we connect $Y_j$ and $Y_j'$, $Z_k$ and $Z_k'$ by loops $\beta_j$ and $\gamma_k$ respectively. The homology class of these loops are $[\alpha_i]=\epsilon_i y$, $[\beta_j]=\nu_j x$, $[\gamma_k]= -\tau_k z$. The loop $K'$ = $K \cup_{i=1}^{n} \alpha_i \cup_{j=1}^{m} \beta_j \cup_{k=1}^{l} \gamma_k$ is homologically trivial, so we have:

$$(0,0,0)=[K']=[K] + \left(\sum_{i=1}^n[\alpha_i],\sum_{j=1}^m[\beta_j],\sum_{k=1}^l[\gamma_k]\right)$$
 
 Thus, $[K]=(\delta, \sigma, \xi)$, where $\delta = \sum_{i=0}^{n} \epsilon_i$,  $\sigma = \sum_{j=0}^{m} \nu_j$,  $\xi = \sum_{k=0}^{l} -\tau_i$.
 
 \end{proof}
 
 \begin{lemma}\label{homology} Let $L$ be a link in 3-torus $T^3$, with components $L_1,...,L_{\omega}$. For each $\iota=1,...,\omega$, let $(\delta_{\iota},\sigma_{\iota},\xi_{\iota})= [L_\iota] \in \mathbb{Z}^{3}=H_1(T^3)$. Then 
 
  \begin{equation*}
   H_1({T^3 \backslash L }) \cong
    \begin{cases}
       \mathbb{Z}^3 \oplus \mathbb{Z}_\rho, & \text{if   }\omega =1 \\
       \mathbb{Z}^3 \oplus \mathbb{Z}_\kappa \oplus \mathbb{Z}_\lambda,  & \text{if    } \omega = 2 \\
      \mathbb{Z}^{\omega} \oplus \mathbb{Z}_\zeta \oplus \mathbb{Z}_\eta \oplus \mathbb{Z}_\theta, & \text{if    } \omega \geq 3.
    \end{cases}
  \end{equation*}

where  $\rho = gcd({\delta_1, \sigma_1, \xi_1})$;
$\kappa$ and $\lambda$ are the invariant factor of the matrix  $M_1$;
$\zeta, \eta$ and $\theta$ are the invariant factor of the matrix $M_2$.

$$M_1=
\begin{pmatrix}
\delta_1 & \delta_2\\
\sigma_1 & \sigma_2\\
\xi_1 & \xi_2
\end{pmatrix};
$$
$$
M_2=
\begin{pmatrix}
\delta_1 & \delta_2 & ... & \delta_{\omega}\\
\sigma_1 & \sigma_2 & ... & \sigma_{\omega}\\
\xi_1 & \xi_2 & ... & \xi_{\omega}
\end{pmatrix}.
$$

\end{lemma}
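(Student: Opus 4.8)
The plan is to read off $H_1(T^3\setminus L)$ by abelianizing the finite presentation of $\pi_1(T^3\setminus L)$ established in the previous section, and then to reduce the resulting integer relation matrix to Smith normal form. First I would pass to additive notation and determine what each family of relations $W$, $Q$, $T$ becomes in the abelianization. The Wirtinger relations $W$, of type $a_ia_ja_i^{\pm1}a_k^{-1}=1$, collapse (conjugation being trivial in an abelian group) to the identification of the meridians of the two under-arcs at each crossing. The boundary relations $Q$, namely $x_i'=y\gamma_{k+1}^{-1}x_i\gamma_{k+1}y^{-1}$ and its analogues, lose all their conjugating words and simply give $x_i'=x_i$, $y_j'=y_j$, $z_k'=z_k$. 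Travelling once around each component $L_\iota$, these two kinds of identifications merge all of its arc-meridians into a single generator $m_\iota$; hence the meridional generators reduce to $m_1,\dots,m_\omega$, one per component, while $x,y,z$ are unconstrained by $W$ and $Q$.

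Next I would abelianize the torus relations $T$. The relation $x_1^{\epsilon_1}\cdots x_n^{\epsilon_n}=zxz^{-1}x^{-1}$ has a commutator on the right that dies, and on the left the meridians group by component; using $\delta_\iota=\sum_{X_i\subset L_\iota}\epsilon_i$ as in Lemma \ref{homotype}, it becomes $\sum_\iota \delta_\iota m_\iota=0$. Likewise the other two torus relations give $\sum_\iota \sigma_\iota m_\iota=0$ and $\sum_\iota \xi_\iota m_\iota=0$ (the sign convention $\xi_\iota=\sum -\tau_k$ only multiplies a relation by $-1$, which leaves the generated subgroup unchanged). The key structural observation is that these three surviving relations involve the $m_\iota$ alone, so the presentation matrix is block diagonal: the generators $x,y,z$ span a free direct summand $\mathbb{Z}^3$ (geometrically, every meridian bounds a disk in $T^3$ and so dies under $T^3\setminus L\hookrightarrow T^3$, whereas $x,y,z$ map to a basis of $H_1(T^3)$). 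This yields
\[
H_1(T^3\setminus L)\;\cong\;\mathbb{Z}^3\oplus\bigl(\mathbb{Z}^\omega/R\bigr),
\]
where $R\subseteq\mathbb{Z}^\omega$ is generated by the three rows $(\delta_\iota)_\iota$, $(\sigma_\iota)_\iota$, $(\xi_\iota)_\iota$ of $M_2$.

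Finally I would compute the quotient $\mathbb{Z}^\omega/R$ by putting $M_2$ into Smith normal form, which diagonalizes $R$ and produces $\min(3,\omega)$ invariant factors $d_1\mid d_2\mid\cdots$, with the conventions $\mathbb{Z}_0=\mathbb{Z}$ and $\mathbb{Z}_1=0$. For $\omega=1$ the single invariant factor of the column $(\delta_1,\sigma_1,\xi_1)^{\!\top}$ is $\rho=\gcd(\delta_1,\sigma_1,\xi_1)$, giving $\mathbb{Z}^3\oplus\mathbb{Z}_\rho$. For $\omega=2$ the two invariant factors $\kappa,\lambda$ of $M_1$ give $\mathbb{Z}^3\oplus\mathbb{Z}_\kappa\oplus\mathbb{Z}_\lambda$. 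For $\omega\geq3$ the three invariant factors $\zeta,\eta,\theta$ of $M_2$ give a quotient $\mathbb{Z}^{\omega-3}\oplus\mathbb{Z}_\zeta\oplus\mathbb{Z}_\eta\oplus\mathbb{Z}_\theta$, whose free part $\mathbb{Z}^{\omega-3}$ recombines with the ambient $\mathbb{Z}^3$ into $\mathbb{Z}^\omega$, yielding $\mathbb{Z}^\omega\oplus\mathbb{Z}_\zeta\oplus\mathbb{Z}_\eta\oplus\mathbb{Z}_\theta$.

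The step I expect to be the main obstacle is not any deep topology but the careful bookkeeping in the second paragraph: translating the torus relations into the correct integer combinations of the $m_\iota$ with the right signs, verifying that these are the only relations surviving on the meridians, and stating the invariant-factor conventions cleanly enough that all three cases — including any rank-deficient sub-cases where some $d_i$ vanish — are handled by one uniform argument rather than by separate ad hoc computations.
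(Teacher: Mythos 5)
Your proposal is correct and follows essentially the same route as the paper's own proof: abelianize the presentation of $\pi_1(T^3\setminus L)$, observe that the $W$ and $Q$ relations collapse the meridional generators to one generator per component while leaving $x,y,z$ free, translate the $T$ relations into the three integer relations given by the rows of $M_2$, and read off the answer via invariant factors. Your version is in fact more careful than the paper's (explicit sign conventions, the $\mathbb{Z}_0=\mathbb{Z}$, $\mathbb{Z}_1=0$ conventions, and the uniform treatment of rank-deficient cases), but it is the same argument.
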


\begin{proof}
We abelianize the fundamental group presentation given in previous section. Relation of type $W$ and $Q$ imply that generators corresponding to the same link component are homologous. So $H_1(T^3 \backslash L)$ is generated by $g_1, ..., g_{\omega}$, which are generators corresponding to the link components, and $x,y,z$ generators corresponding to the 3-torus $T^3$. Relation $T$ become:
  
\begin{align}
\delta_1 g_1 + ... + \delta_{\omega} g_{\omega}&=0,\\
\sigma_1 g_1 + ... + \sigma_{\omega} g_{\omega}&=0,\\
\xi_1 g_1 + ... + \xi_{\omega} g_{\omega}&=0.
\end{align}

Thus we have the first homology group $ H_1({T^3 \backslash L }) $ of link complement in 3-torus by Hurewicz's theorem, that's generated by $g_1,...,g_{\omega},x,y,z$ and its relators are (1)(2)(3) as above. If $\omega=1$ then $H_1({T^3 \backslash L }) \cong   \mathbb{Z}^3 \oplus \mathbb{Z}_\rho$, where  $\rho = gcd({\delta_1, \sigma_1, \xi_1})$. If $\omega=2$ then $H_1({T^3 \backslash L }) \cong  \mathbb{Z}^3 \oplus \mathbb{Z}_\kappa \oplus \mathbb{Z}_\lambda$, where $\kappa$ and $\lambda$ are the invariant factor of the matrix  $M_1$. If $\omega \geq 3$ then $H_1({T^3 \backslash L }) \cong \mathbb{Z}^{\omega} \oplus \mathbb{Z}_\zeta \oplus \mathbb{Z}_\eta \oplus \mathbb{Z}_\theta$, where $\zeta, \eta$ and $\theta$ are the invariant factor of the matrix $M_2$. 

\end{proof}

\section{The Alexander-Fox matrix and twisted Alexander polynomials of links in three dimensional torus}

Given a presentation of the group of a link, one may calculate its Alexander polynomial using Fox free calculus \cite{Foxquick}. We recall the following definition of Alexander polynomials (see \cite{Turaev}). Let

$$
P=\langle x_1,...,x_n | r_1,...,r_m \rangle
$$

be a presentation of a group $G$ and denote by $H=G/G'$ its abelianization. Let $F=\langle x_1,...,x_n \rangle$ be the corresponding free group. We apply the chain of maps

$$
\mathbb{Z}F \xrightarrow{\frac{\partial}{\partial x}} \mathbb{Z}F \xrightarrow{\gamma} \mathbb{Z}G  \xrightarrow{\alpha} \mathbb{Z}H,
$$

where $\frac{\partial}{\partial x}$ denotes the Fox differential, $\gamma$ is the quotient map by relations $r_1,...,r_m$ and $\alpha$ is the abelianization map. The Alexander-Fox matrix of the presentation $P$ is the matrix $A=[a_{i,j}]$,  where $a_{i,j}=\alpha(\gamma (\frac{\partial r_i}{\partial x_j}))$ for $i=1,...,m$ and $j=1,...,n$. For $k= 1,..., \text{  min}\{m-1,n-1\}$, the $k$-th elementary ideal $E_k(P)$ is the ideal of  $\mathbb{Z}H$, generated by the determinants of all the $(n-k)$ minors of $A$. The first elementary ideal $E_1(P)$ is the ideal of $\mathbb{Z}H$, generated by the determinants of the all the $(n-1)$ minors of $A$.

\begin{definition}. Let $L \subset S^3$ be a link, and let $E_k(P)$ be the $k$-th elementary ideal, obtained from a presentation $P$ of fundamental group $\pi_1 (S^3 \backslash L, *)$. Then the $k$-th link polynomial $\Delta_k(L)$ is the generator of the smallest principal ideal containing $E_k(P)$. The Alexander polynomial of $L$, denoted by $\Delta(L)$, is the first link polynomial of $L$.
\end{definition}

For a classical link $L$ in $S^3$, the abelianization of $\pi_1 (S^3 \backslash L, *)$ is the free abelian group, whose genereators correspond to the components of $L$. For a link in 3-torus $T^3$, the abelianization of its link group may also contain torsion, as we know by Lemma \ref{homology}. In this case, we need the notion of a twisted Alexander polynomial.

Let $G$ be a group with a finite presentation $P$ and ablianizatioon $H=G/G'$ and denote $K=H/Tors(H)$. Then every homomorphism $\sigma: Tors(H) \rightarrow \mathbb{C}^*=\mathbb{C}\backslash\{0\}$ determines a twisted Alexander polynomial $\Delta^{\sigma}(P)$ as follows. Choosing a splitting $H=Tors(H) \times K$, $\sigma$ defines a ring homomorphism $\sigma: \mathbb{Z}H \rightarrow \mathbb{C} K$ sending $(f,g) \in Tors(H) \times K$ to $\sigma(f)g$. Thus we apply the chain of maps

$$
\mathbb{Z}F \xrightarrow{\frac{\partial}{\partial x}} \mathbb{Z}F \xrightarrow{\gamma} \mathbb{Z}G  \xrightarrow{\alpha} \mathbb{Z}H \xrightarrow{\sigma} \mathbb{C} K
$$

and obtain the $\sigma$-twisted Alexander matrix $A^{\sigma} = \left[ \sigma(\alpha(\gamma (\frac{\partial r_i}{\partial x_j})))\right]$. The twisted Alexander polynomial is then defined by $\Delta ^{\sigma}(P)=\text{gcd} (\sigma(E_1(P)))$.

\begin{definition} Let $L \subset T^3$ be a link in the three dimensional torus $T^3$. For any presentation $P$ of the link group $\pi_1(T^3 \backslash L,*)$, we may define the following.

The Alexander polynomial of $L$, denoted by $\Delta (L)$, is the generator of the smallest principal ideal containing $E_1(P)$.

For any homomorphism $\sigma: Tors (H_1(T^3\backslash L)) \rightarrow \mathbb{C}^*$, the $\sigma$-twisted Alexander polynomial of $L$ is $\Delta^{\sigma}(L) = gcd (\sigma(E_1(P)))$.

\end{definition}

We know from Lemma \ref{homology} that the torsion subgroup of $H_1(T^3 \backslash L))$ is the group $\mathbb{Z}_\zeta \oplus \mathbb{Z}_\eta \oplus \mathbb{Z}_\theta$ in general. So the image of the group homomorphism $\sigma: Tors(H_1(T^3\backslash L)) \rightarrow \mathbb{C}^*$ is contained in the cyclic group, generated by $\Omega$, the $d$-root of unity, where $d$ is $lcm(\zeta,\eta,\theta)$. The $\sigma$-twisted Alexander polynomial $\Delta^{\sigma} (L) \in \mathbb{Z} [\Omega][K]$ is defined up to multiplication by $\Omega^j g$, with $g \in K$.

A link is called local or affine if it is contained in a ball embedded in 3-torus $T^3$. For local links the following properties hold.

\begin{proposition}
Let $L$ be a local link in 3-torus $T^3$. Then the Alexander polynomial $\Delta_L = 0$.
\end{proposition}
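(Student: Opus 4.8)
The plan is to exploit that a local link splits off from the ambient torus, so that the link group is a free product whose Alexander--Fox matrix is block diagonal; a rank count then forces $E_1=0$. Since the elementary ideals $E_k(P)$ are invariants of the group $G$ together with its abelianization $H$ and are independent of the chosen finite presentation, I am free to compute $\Delta_L$ from the most economical presentation rather than from the diagrammatic presentation with relations $W$, $T$, $Q$.

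First I would record the free-product structure. Write $L\subset\operatorname{int}(B^3)$ for a ball $B^3\subset T^3$ and decompose $T^3\setminus L=(B^3\setminus L)\cup(T^3\setminus\operatorname{int}B^3)$ along a collar of $\partial B^3\simeq S^2$. As $S^2$ is simply connected, van Kampen gives $\pi_1(T^3\setminus L)\cong\pi_1(B^3\setminus L)\ast\pi_1(T^3\setminus\operatorname{int}B^3)$. The first factor is the classical link group $G_L=\pi_1(S^3\setminus L)$ (filling the complementary ball leaves $\pi_1$ unchanged), carried by a Wirtinger presentation $\langle a_1,\dots,a_r\mid w_1,\dots,w_r\rangle$; the second retracts onto the $2$-skeleton of $T^3$ and equals $\mathbb{Z}^3=\langle x,y,z\mid[x,y],[y,z],[z,x]\rangle$. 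Thus $G$ has the presentation $P=\langle x,y,z,a_1,\dots,a_r\mid[x,y],[y,z],[z,x],w_1,\dots,w_r\rangle$. Moreover each component of $L$ bounds in $B^3$, so $\delta_\iota=\sigma_\iota=\xi_\iota=0$; by Lemma \ref{homology} the torsion summands collapse ($\mathbb{Z}_0=\mathbb{Z}$) and $H=H_1(T^3\setminus L)\cong\mathbb{Z}^{3+\omega}$ is free, whence $\mathbb{Z}H$ is an integral domain and ranks over its field of fractions are meaningful.

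Next I would read off the Alexander--Fox matrix of $P$. Because the three commutator relators involve only $x,y,z$ while each $w_k$ involves only the $a_i$, the matrix is block diagonal, $A=\operatorname{diag}(B,C)$, with $B$ the $3\times3$ Jacobian of the commutators and $C$ the $r\times r$ classical Alexander matrix of $L$. Two independent rank drops now appear. A direct expansion gives $\det B=0$ (the two surviving $3\times3$ terms cancel), so $\operatorname{rank}B\le2$. Separately, the fundamental Fox identity $\sum_i\frac{\partial w_k}{\partial x_i}(x_i-1)=w_k-1$, pushed forward by $\alpha\gamma$ and using $w_k=1$ in $G$, yields the column relation $\sum_i(t_{c(i)}-1)\operatorname{col}_i(C)=0$ with $t_{c(i)}-1\neq0$; hence the $r$ columns of $C$ are dependent and $\operatorname{rank}C\le r-1$.

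Finally I would combine the two bounds. Since $A$ is block diagonal, $\operatorname{rank}A=\operatorname{rank}B+\operatorname{rank}C\le2+(r-1)=r+1$, while the number of generators of $P$ is $n=r+3$. The ideal $E_1(P)$ is generated by the $(n-1)=(r+2)$-sized minors of $A$, and a matrix of rank at most $r+1$ has all $(r+2)\times(r+2)$ minors equal to zero. Therefore $E_1(P)=0$ and $\Delta_L=0$. I expect the only real point needing care to be the justification that $\Delta_L$ may be computed from the free-product presentation rather than from the diagrammatic one, i.e. the presentation-independence of the elementary ideals; the two vanishing inputs, $\det B=0$ and the Wirtinger column dependency, are short and standard.
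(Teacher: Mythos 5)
Your proof is correct and takes essentially the same route as the paper: both work from the presentation $\langle x,y,z,a_1,\dots,a_r \mid [x,y],[y,z],[z,x],w_1,\dots,w_r\rangle$ (the paper gets it from its diagrammatic Theorem 2 with empty $Q$-relations, you from van Kampen along $\partial B^3$), observe that the Alexander--Fox matrix is block diagonal with a $3\times 3$ torus block of vanishing determinant, and conclude $E_1=0$. If anything your write-up is the more careful one: the paper's formula $\Delta_L=\Delta_{\bar{L}}\det T$ with $\det T=0$ silently uses the classical fact $\det A_{\bar{L}}=0$ (needed to kill the minors pairing a $2\times 2$ minor of $T$ with $\det A_{\bar{L}}$), whereas your rank count makes that second ingredient explicit via the Fox column identity $\operatorname{rank} C\le r-1$.
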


\begin{proof}

The fundamental group of $L$ can be presented with the relation of Wirtinger type and the torus relations only. The generator for the group are torus generator $x,y,z$ and the generators corresponding to arcs of the diagram as the one for $L$ in $S^3$. The Jacobian matrix $J$ has the following form

$$
   J= \left[\begin{array}{c c c | c} 
    	y-yxy^{-1}x^{-1} &1-yxy^{-1}& 0 & \\ 
	z-zxz^{-1}x^{-1}    & 0 & 1-zxz^{-1}&0 \\
	  0 & 1- yzy^{-1} & y - yzy^{-1}z^{-1} &\\
    	\hline 
    	 & 0 &  & J_{\bar{L}}
    \end{array}\right] 
$$

where $J_{\bar{L}}$ is the Jacobian matrix of the link group of the link $L$ consider as a link in $S^3$.

The upper left block of the matrix $A$ is the part corresponding to torus generator $x,y,z$ and their relations $yxy^{-1}x^{-1}, zxz^{-1}x^{-1},yzy^{-1}z^{-1}$. For this case the homology group does not contain torsion, so via natural projection to $\mathbb{Z}H$, sending every generators to $t$. We have the Alexander-Fox matrix has the form

$$
   A= \left[\begin{array}{c c c | c} 
    	t-1 &1-t& 0 & \\ 
	t-1    & 0 & 1-t&0 \\
	  0 & 1- t & t - 1 &\\
    	\hline 
    	 & 0 &  & A_{\bar{L}}
    \end{array}\right] 
$$

From this we easily see that $\Delta_L= \Delta_{\bar{L}} \text{det} T$, where $T$ is the matrix corresponding to the upper left block of the matrix $A$. We have $\text{det}T=0$, this implies $\Delta_L = 0$.

\end{proof}

As a consequence a knot in 3-torus with a nontrivial Alexander polynomial cannot be local.

Let $L=L_1 \# L_2$, where $\#$ denotes the connected sum and $L_2$ is a local link. The decomposition $(T^3, L)=(T^3,L_1) \# (S^3,L_2)$ induces monomorphism $j_1: H_1(T^3 \backslash L_1) \rightarrow H_1(T^3 \backslash L)$ and $j_2: H_1(T^3 \backslash L_2) \rightarrow H_1(T^3 \backslash L)$. Given $\sigma: \mathbb{Z} [H_1(T^3 \backslash L)] \rightarrow \mathbb{C} [G]$ induced by $\sigma \in \text{hom}(Tors(H_1(T^3\backslash L)) \rightarrow \mathbb{C}^*)$, denote with $\sigma_1, \sigma_2$ its restrictions to $\mathbb{Z} [j_1(H_1(T^3 \backslash L_1))] $ and  $\mathbb{Z} [j_2(H_1(T^3 \backslash L_2))] $ respectively. We have the following proposition, that is analogous to the Proposition 8 in \cite{Cattabriga1}.

\begin{proposition}
Let $L =L_1\#L_2 \subset T^3$, where $L_2$ is a local link. Then $\Delta^{\sigma} (L)=\Delta^{\sigma_1} (L_1) \cdot \Delta^{\sigma_2} (L_2)$.
\end{proposition}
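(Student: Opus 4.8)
The plan is to obtain a single presentation of $\pi_1(T^3\setminus L)$ from presentations of the two factor groups and then read its twisted Alexander matrix off in block form, exactly in the spirit of the block-triangular argument used for the preceding proposition.

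First I would set up the decomposition topologically. The connected sum is performed along a splitting sphere meeting $L$ in two points, so that, after removing a ball neighbourhood of the summing arc from each side, $T^3\setminus L = (T^3\setminus L_1)\cup(S^3\setminus L_2)$ with the two pieces glued along the annulus $S^2$ minus two points, whose fundamental group is the infinite cyclic group generated by the meridian $m$ of the summing strand. Seifert--van Kampen then gives
\[
\pi_1(T^3\setminus L)\;\cong\;\pi_1(T^3\setminus L_1)\ast_{\mathbb{Z}}\pi_1(S^3\setminus L_2),
\]
the amalgamation identifying a meridian $m_1$ of $L_1$ with a meridian $m_2$ of $L_2$. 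Taking presentations $P_1=\langle X\mid R\rangle$ and $P_2=\langle Y\mid S\rangle$ in which $m_1\in X$ and $m_2\in Y$ are generators, and eliminating $m_2$ by a Tietze move, I obtain $P=\langle\,X,\,Y\setminus\{m_2\}\mid R,\,S'\,\rangle$, where $S'$ is $S$ with every occurrence of $m_2$ replaced by $m_1$.

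Next I would verify the coefficient bookkeeping: because $j_1$ and $j_2$ are monomorphisms, the map $\sigma\circ\alpha\circ\gamma$ restricted to words in $X$ agrees with $\sigma_1\circ\alpha_1\circ\gamma_1$, and its restriction to words in $Y\setminus\{m_2\}$ agrees with $\sigma_2\circ\alpha_2\circ\gamma_2$. This is precisely what is needed so that the relevant blocks of the twisted Alexander matrix of $P$ are \emph{literally} the matrices $A_1^{\sigma_1}$ and $A_2^{\sigma_2}$. Since no relation of $R$ contains a generator of $Y\setminus\{m_2\}$, and every relation of $S'$ touches $X$ only through $m_1$, Fox's product and chain rules force the block-triangular shape
\[
A^{\sigma}=\begin{pmatrix}A_1^{\sigma_1}&0\\ C&A_2^{\sigma_2}\end{pmatrix},
\]
in which $C$ is supported entirely in the single column of $m_1$ and there equals the $m_2$-column of $A_2^{\sigma_2}$. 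Deleting the $m_1$-column therefore splits $A^{\sigma}$ into a genuine block-diagonal matrix whose maximal minors factor as a product of a maximal minor of $A_1^{\sigma_1}$ with one of $A_2^{\sigma_2}$.

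The hard part will be passing from this factorization of individual minors to the identity $\Delta^{\sigma}(L)=\Delta^{\sigma_1}(L_1)\cdot\Delta^{\sigma_2}(L_2)$ of gcd's. Deleting the shared column only produces those minors of each factor in which \emph{that particular} meridian column is removed, so a priori I only obtain a divisibility between the gcd over this restricted family and the genuine twisted polynomials. To upgrade it to equality I plan to invoke the fundamental Fox identity $\sum_j\frac{\partial r}{\partial x_j}(x_j-1)=r-1$, which after applying $\sigma\alpha\gamma$ yields the column relation $\sum_j(\phi(x_j)-1)\,\mathrm{col}_j(A^{\sigma})=0$ in each block; this relation makes the maximal minors obtained by deleting different columns agree up to the units $\phi(x_j)-1$, so that the gcd over all column choices is unchanged and the coupling column $C$ contributes no extra factor. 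Reconciling these unit factors across the two blocks, and checking the degenerate cases where a factor has only one component, is the delicate bookkeeping that the full proof must carry out.
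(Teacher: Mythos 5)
Your overall route coincides with the paper's: decompose along the summing sphere via Seifert--van Kampen, assemble one presentation of $\pi_1(T^3\setminus L)$ from presentations of the two factors, observe that the twisted Alexander matrix is block-structured, and factor the gcd of maximal minors. The only structural difference is cosmetic: you Tietze-eliminate the meridian $m_2$ and get a block-triangular matrix with a coupling column $C$, whereas the paper keeps both meridians as generators together with the extra relation $a_1=b_1$, getting a block-diagonal matrix with one extra row $(-1,0,\dots,0,1,0,\dots,0)$; the two presentations differ by a single Tietze move, which does not change the elementary ideals.

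The genuine problem is in the step you yourself flag as delicate, because the shortcut you propose there is false. The Fox identity does give the column relation $\sum_j(\phi(x_j)-1)\,\mathrm{col}_j(A^{\sigma})=0$, but what follows from it is only the proportionality $(\phi(x_k)-1)M_{(j)}=\pm(\phi(x_j)-1)M_{(k)}$ between minors deleting different columns; the elements $\phi(x_j)-1$ are \emph{not} units of $\mathbb{C}K$, the minors do not agree up to units, and the gcd over a fixed column choice is in general strictly larger than the gcd over all columns. A one-line counterexample to your claim: for $\langle a,b \mid aba^{-1}b^{-1}\rangle$ with $a\mapsto t_1$, $b\mapsto t_2$, the matrix is $\left[\,1-t_2,\; t_1-1\,\right]$, so the gcd of minors deleting the $a$-column is $t_1-1$, deleting the $b$-column is $t_2-1$, while the gcd over all columns is $1$. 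Consequently, in your block-diagonal reduction the family of minors that delete the $m_1$-column can have gcd strictly larger than $\Delta^{\sigma_1}(L_1)\cdot\Delta^{\sigma_2}(L_2)$ (this happens as soon as a factor is a link whose meridians have distinct images under $\phi$), and the cross minors deleting other columns are genuinely needed to cut the gcd down; tracking the $(\phi(x_j)-1)$ factors across both blocks is the actual content of the argument, not a reconciliation of units. In fairness, the paper hides exactly the same difficulty behind the phrase that a ``straightforward computation'' yields $d_{m+n-1}(A_L)=d_{n-1}(A_{L_1})\cdot d_{m-1}(A_{L_2})$, so your proposal is no less complete than the paper's own proof --- but as written your route to closing the gap would fail, while the paper's omission is at least not a false assertion.
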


\begin{proof}
Let $\pi_1 (T^3 \backslash L_1= \langle a_1,...,a_n | r_1,...,r_n \rangle$ is the fundamental group of complement of the link $L_1$ in $T^3$ and $\pi_1 (S^3 \backslash L_2= \langle b_1,...,b_m | s_1,...,s_m \rangle$. Then by the Van Kampen theorem we get a presentation for $\pi_1 (T^3 \backslash L)$ is $\langle a_1,...,a_n, b_1,...,b_m | r_1,...,r_n,s_1, ..., s_m, a_1=b_1 \rangle$. We have the Alexander-Fox matrix of $L$ as follow

$$
   A_L= \left[\begin{array}{c c c c c c c c} 
             &&A_{L_1}&&&&0&\\
             &&0&&&&A_{L_2}&\\
             -1&0&...&0&1&0&...&0
    \end{array}\right] ,
$$

where $A_{L_i}$ is the ALexander-Fox matrix of $L_i$, for $i=1,2$. If $d_k(A)$ denotes the greatest common division of all $k$-minors of a matrix $A$, then by straight forward computation will show that $d_{m+n-1}(A_L) = d_{n-1}(A_{L_1}) \cdot d_{m-1}(A_{L_2})$. So for the case of first elementary ideal of $\pi$, which is the ideal of $\mathbb{C}G$, that implies the equation $\Delta^{\sigma} (L)=\Delta^{\sigma_1} (L_1) \cdot \Delta^{\sigma_2} (L_2)$ for twisted Alexander polynomial.

\end{proof}

\section{Examples}

In this section we carry computation of (twisted) Alexander polynomials for some simple knots in 3-torus.

\textbf{Example 1.} Local trivial knot. The Alexander polynomial of it is 0.

\textbf{Example 2.} Global trivial knot $U_1$, winding along the loop $x$ of the 3-torus once. A diagram for it is depicted in the Fig. \ref{Fig13}. The fundamental group $\pi_1(T^3 \backslash U_1)$ is 

\begin{equation*}\label{eq1}
  \begin{gathered}
 \pi_1(T^3 \backslash U_1)=\langle x,y,z,x_1,x_1'| x_1=x_1', x_1'=y x_1 y^{-1},zxz^{-1}x^{-1}=x_1,\\
 yxy^{-1}x^{-1}=1,yzy^{-1}z^{-1}=1 \rangle = \langle x,y,z, x_1 | x_1yx_1^{-1}y^{-1}=1, \\
 zxz^{-1}x^{-1}=x_1,yxy^{-1}x^{-1}=1,yzy^{-1}z^{-1}=1 \rangle
  \end{gathered}
\end{equation*}

\begin{figure}[htbp]
\begin{center}
\includegraphics[scale=0.5]{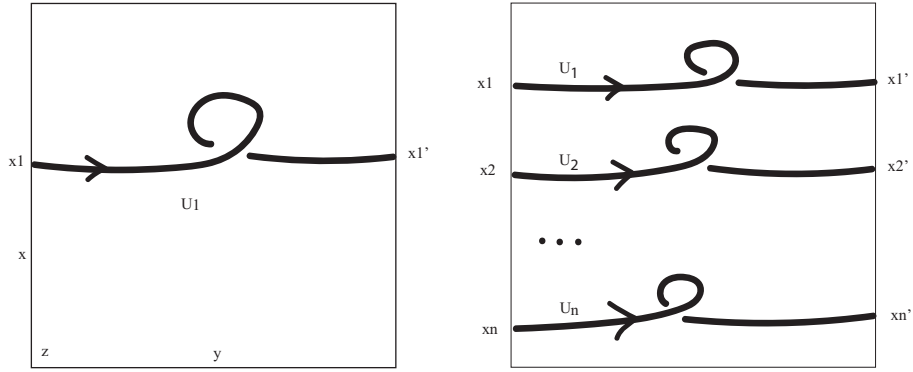}
\end{center}
\caption{Diagrams of $U_1$ (left), $L_n$ (right)}
\label{Fig13}
\end{figure}

Its first homology group is $\langle x,y,z, x_1| x_1=1, xy=yz,xz=zx,yz=zy\rangle = \langle x,y,z | xy=yz,xz=zx,yz=zy \rangle  \cong \mathbb{Z}^3 $. The Jacobian matrix of $\pi_1(T^3 \backslash U_1)$ is

\resizebox{\linewidth}{!}{
$\displaystyle
   A_{U_1}= \left[\begin{array}{c | c c c c } 
           \text{relators}&x&y&z&x_1\\
           \hline
           x_1yx_1^{-1}y^{-1}=1& x_1 - x_1yx_1^{-1}y^{-1}&0&0&1- x_1yx_1^{-1}\\
           zxz^{-1}x^{-1}x_1^{-1}=1&z-zxz^{-1}x^{-1}&0&1-zxz^{-1}&-zxz^{-1}x^{-1}x_1^{-1}\\
           yxy^{-1}x^{-1}=1&y-yxy^{-1}x^{-1}&1-yxy^{-1}&0&0\\
           yzy^{-1}z^{-1}=1&0&1- yzy^{-1}&y-yzy^{-1}z^{-1}&0
    \end{array}\right]
$}

\vspace{10pt}

The first homology group of $T^3 \backslash U_1$ is torsion free, so we have Alexander-Fox matrix is

$$
   A_{U_1}= \left[\begin{array}{ c c c c }
   0&0&0&1-t\\
   t-1&0&1-t&-1\\
   t-1&1-t&0&0\\
  0&1-t&t-1&0
    \end{array}\right] ,
$$

and the Alexander polynomial is $\Delta_{U_1}(t)= (t-1)^2$

\textbf{Example 3.} Global trivial link $L_n$ from $n$ unlinked component $U_1,...,U_n$, all components winding along the loop $x$ of the 3-torus once. A diagram for it is depicted in the Fig. \ref{Fig13}. The fundamental group of $\pi_1(T^3 \backslash L_n)$ is 
\begin{equation*}\label{eq1}
  \begin{gathered}
    \langle x,y,z,x_1,x_1',...,x_n,x_n'| x_1'=y x_1 y^{-1},...,x_n'=y x_n y^{-1},       \\
    zxz^{-1}x^{-1}=x_1...x_n,yxy^{-1}x^{-1}=1,yzy^{-1}z^{-1}=1 \rangle \\
    = \langle x,y,z, x_1,...,x_n | x_1yx_1^{-1}y^{-1}=1,..., x_nyx_n^{-1}y^{-1}=1, \\
    zxz^{-1}x^{-1}=x_1...x_n,yxy^{-1}x^{-1}=1,yzy^{-1}z^{-1}=1 \rangle
  \end{gathered}
\end{equation*}

Its first homology group is commutative additive group $\langle x,y,z, x_1,...,x_n| x_1+ ... + x_n=0 \rangle = \langle x,y,z, x_1,...,x_{n-1} | \rangle  \cong \mathbb{Z}^{n+2} $. The Jacobian matrix $A_{U_1}$ of $\pi_1(T^3 \backslash L_n)$ is

\resizebox{\linewidth}{!}{
$\displaystyle
  \left[\begin{array}{c | c c c c c c c} 
    \text{relators}&x&y&z&x_1&x_2&...&x_n\\
           \hline
      x_1xx_1^{-1}x^{-1}=1& x_1 - x_1xx_1^{-1}x^{-1}&0&0&1- x_1xx_1^{-1}&0&...&0\\
             x_2xx_2^{-1}x^{-1}=1& x_2 - x_2xx_2^{-1}x^{-1}&0&0&0&1- x_2xx_2^{-1}&...&0\\
           ...&...&...&...&...&...&...&...\\
            x_nxx_n^{-1}x^{-1}=1& x_n - x_nxx_n^{-1}x^{-1}&0&0&0&0&...&1- x_nxx_n^{-1}\\
           zxz^{-1}x^{-1}x_n^{-1}...x_1^{-1}=1&z-zxz^{-1}x^{-1}&0&1-zxz^{-1}&-zxz^{-1}x^{-1}x_n^{-1}...x_1^{-1}&-zxz^{-1}x^{-1}x_n^{-1}...x_2^{-1}&...&-zxz^{-1}x^{-1}x_n^{-1}   \\
           yxy^{-1}x^{-1}=1&y-yxy^{-1}x^{-1}&1-yxy^{-1}&0&0&0&...&0\\
           yzy^{-1}z^{-1}=1&0&1- yzy^{-1}&z-yzy^{-1}z^{-1}&0&0&...&0        
    \end{array}\right]
$}

\vspace{+10pt}

The first homology group is torsion free, we get Alexander-Fox matrix, sending generators $x,y,z,x_1,...,x_{n-1}$ to $t$, and $x_n$ to 1. 

$$
   A_{U_1}= \left[\begin{array}{c c c c c c c} 
    0&0&0&1- t&0&...&0\\
    t -1&0&0&0&1- t&...&0\\
    ...&...&...&...&...&...&...\\
    t -1&0&0&0&0&...&1- t\\
    t-1&0&1-t&-t^{-(n-1)}&-t^{-(n-2)}&...&-1\\
    t-1&1-t&0&0&0&...&0\\
    0&1- t&t-1&   0&0&...&0       
    \end{array}\right],
$$

With a simple computation via induction by the number of component, we have the Alexander polynomial for the link $L_n$ is $\Delta _{L_n}=(t-1)^{n+1}$

\section*{Acknowledgments}

The author is grateful to professor Louis Kauffman for his useful comments and conversations, anonymous referee for the verification. This work was supported by the Ministry of Science and Higher Education of Russia (agreement No. 075-02-2023-943).

\bigskip

\end{document}